\documentclass[12pt,reqno]{amsart}
\usepackage{fullpage}
\usepackage{times}
\usepackage{amsmath,amssymb,amsthm,url}
\usepackage[utf8]{inputenc}
\usepackage[english]{babel}
\usepackage{comment}
\usepackage{bbm}
\usepackage{enumerate}
\usepackage{bm}
\usepackage{graphicx}
\usepackage{mathrsfs}
\usepackage[colorlinks=true, pdfstartview=FitH, linkcolor=blue, citecolor=blue, urlcolor=blue]{hyperref}
\usepackage{tabstackengine}
\stackMath
\numberwithin{equation}{section}

\newtheorem{thm}{Theorem}[section]
\newtheorem{lem}[thm]{Lemma}
\newtheorem{prop}[thm]{Proposition}
\newtheorem{cor}[thm]{Corollary}
\newtheorem{conj}[thm]{Conjecture}
\newtheorem{claim}[thm]{Claim}

\theoremstyle{definition}

\newtheorem{rem}[thm]{Remark}

\newenvironment{poc}{\begin{proof}[Proof of claim]}{\end{proof}}

\newcommand{\F}{\mathbb{F}}

\newcommand{\sgn}{\operatorname{sgn}}

\title{An absolute bound for generalized Diophantine tuples\\over polynomial rings}
\author{Kin Ming Tsang}
\address{Department of Mathematics \\ University of British Columbia\\ Vancouver  V6T 1Z2 \\ Canada}
\email{kmtsang@math.ubc.ca}
\author{Chi Hoi Yip}
\address{Department of Mathematics, Hong Kong University of Science and Technology, Clear Water Bay, Hong Kong}
\email{machyip@ust.hk}
\subjclass[2020]{Primary 11C08, 11D41; Secondary 12D10, 11B30.}
\keywords{Diophantine tuple, polynomial ring}

\begin{document}

\begin{abstract}
Let $\mathbb F$ be an algebraically closed field of characteristic $0$. Let $k\geq 2$ be an integer, and let $n\in \mathbb F[x]\setminus\{0\}$. We study generalized Diophantine tuples $A\subset \mathbb F[x]$ with property $D_k(n)$, meaning that $ab+n$ is a $k$-th power in $\mathbb F[x]$ for all distinct elements $a,b\in A$. For $k\ge18$, we prove that every such tuple satisfies $|A|\le6$, except for the necessary exceptional family in which $n=s^2$ is a $k$-th power and $A\subset s\F$. This bound is absolute: it is independent of both $n$ and $\deg n$. Our proof develops a new method for studying polynomial Diophantine tuples,
combining a determinant criterion, generalizations of the Mason--Stothers theorem, and the Combinatorial Nullstellensatz. We also record a conditional analogue for generalized Diophantine tuples over the integers.
\end{abstract}
\maketitle

\section{Introduction}
A set $A$ of positive integers is a \emph{Diophantine tuple} if $ab+1$ is a perfect square for all distinct $a,b\in A$. For example, $\{1,3,8,120\}$ is a Diophantine $4$-tuple, discovered by Fermat. This example is now known to be optimal: after Dujella proved that there are only finitely many Diophantine quintuples \cite{D04} in 2004, He, Togb\'e, and Ziegler \cite{HTZ19} eventually proved that there is no Diophantine quintuple in 2019. The notion of Diophantine tuples has since been generalized in many directions and over many rings. For background and a comprehensive discussion, see Dujella's monograph \cite{D24}. 

A natural generalization is obtained by replacing the square $ab+1$ by
a shifted $k$-th power. Let $n,k$ be integers with $n\neq 0$ and $k\geq 2$.
A set $A$ of positive integers is called a \emph{Diophantine
tuple with property $D_k(n)$} if $ab+n$ is a $k$-th power of a nonnegative integer for all distinct $a,b\in A$. Following standard notation, define
\[
M_k(n)=\sup\{|A|: A\subseteq \mathbb N \text{ satisfies property }D_k(n)\}.
\]
These generalized Diophantine tuples have attracted considerable
attention; see, for example,
\cite{BCM22,BD03,BHP25,DKM22,D02,DF05,DL05,KYY,Y24,Y25,Y26}. The best known
unconditional upper bounds are of the form
\[
M_k(n)\ll_k \log |n|,
\]
with the implied constant depending on $k$; see \cite{KYY,Y24,Y25} for
the strongest known constants.

Under the Uniformity Conjecture (known to be a consequence of the Bombieri–Lang conjecture by Caporaso, Harris, and Mazur \cite{CHM}), it is well-known that for each fixed $k \geq 2$, there is a constant $C_k$ such that $M_k(n)\leq C_k$ holds for all nonzero integers $n$; see, for example, \cite{D02} and \cite[Remark 2.12]{CY26}. Recently Croot and Yip \cite[Theorem 2.10]{CY26} showed that, assuming the Lander--Parkin--Selfridge conjecture~\cite{LPS67} related to sums of powers (see Conjecture~\ref{conj:LPS}), $M_k(n)\leq 21738$ holds for all $k\geq 25$ and $n \neq 0$. Consequently, assuming both conjectures, and after increasing $C$ to handle the finitely many exponents $2\leq k\leq 24$, one obtains an absolute constant $C$ such that $M_k(n)\leq C$ for all $k\geq 2$ and all nonzero integers $n$. Thus, over the integers, absolute boundedness for generalized Diophantine tuples is currently known only conditionally.

The aim of this paper is to prove an unconditional polynomial-ring
analogue of the above conditional absolute boundedness phenomenon.
Polynomial Diophantine tuples were first studied by Jones
\cite{J76,J78}. Throughout the paper, $\F$ denotes an algebraically
closed field of characteristic $0$, $k$ denotes a positive integer at least $2$, and
$n\in \F[x]\setminus\{0\}$. A set $A\subset \F[x]$ is called a \emph{Diophantine tuple with property $D_k(n)$} if the polynomial $ab+n$ is a $k$-th power in $\F[x]$ for all distinct $a,b\in A$. Here we view the zero polynomial as a $k$-th power. These generalized polynomial Diophantine tuples are well-studied. 

Several cases of this polynomial problem were previously understood.
When $n$ is a nonzero constant, one may reduce to the case $n=1$ by
scaling, since $\F$ is algebraically closed. For Diophantine tuples
$A\subset \F[x]$ with property $D_k(1)$ and $A\not\subset \F$, strong
bounds are known. Dujella and Jurasi\'c \cite{DJ10} proved that
$|A|\leq 7$ when $k=2$, while Dujella and Luca \cite{DL07} proved that
$|A|\leq 5$ for $k=3$, $|A|\leq 4$ for $k=4$, $|A|\leq 3$ for
$k\geq 5$, and $|A|\leq 2$ for even $k\geq 5$. The cases where $n$ is
linear or quadratic have also been studied; see, for example,
\cite{DFT02,DFW06} for the linear case and \cite{DFW06,DJ11,J11} for the
quadratic case.

By contrast, there seems to be no systematic study of upper bounds when the degree of $n$ is at least $3$ \footnote{Private communication with Andrej Dujella.}. This is consistent with the usual integer--function field analogy: the best known unconditional bounds over the integers are logarithmic in $|n|$, and $\deg n$ is the natural function-field analogue of $\log |n|$. Thus one expects the problem to become more difficult as $\deg n$ grows. For instance, when $d\geq 3$, it remains open to give an upper bound depending only on $d$ and $k$ for the size of polynomial Diophantine tuples $A\subset \F[x]$ with property $D_k(n)$ and $\deg n=d$; see \cite[Problem 1.14]{Dopen}.

Our main results give an absolute bound, independent of $n$ and its
degree, for all sufficiently large $k$. Thus, for sufficiently large exponents, our theorem shows that the expected dependence
on $\deg n$ disappears entirely.

\begin{thm}\label{thm:main}
Let \(\F\) be an algebraically closed field of characteristic \(0\), let
\(k\geq 18\), and let \(n\in \F[x]\setminus\{0\}\). Let \(A\subset \F[x]\)
be a Diophantine tuple with property \(D_k(n)\). If \(n=s^2\) for some \(s\in \mathbb F[x]\) and \(n\) is a \(k\)-th power in
\(\mathbb F[x]\), assume additionally that
\[
        A\not\subset s\mathbb F:=\{cs:c\in\mathbb F\}.
\]
Then
\[
        |A|\leq 6.
\]
Moreover, if \(n\) is nonsquare and \(k\) is even, then
\[
        |A|\leq 5.
\]
\end{thm}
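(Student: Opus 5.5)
We plan to argue by contradiction: assume \(A=\{a_1,\dots,a_7\}\) (or \(\{a_1,\dots,a_6\}\) in the nonsquare, even-\(k\) case) has property \(D_k(n)\), and write \(a_ia_j+n=y_{ij}^k\). The approach combines a determinant criterion with Mason--Stothers-type bounds, as the abstract suggests.

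\emph{Step 1: an algebraic identity in the \(k\)-th powers.} For any four indices \(i,j,l,m\) the rank-one structure of the matrix \((a_ia_j)\) gives
\[
(y_{ij}^k-n)(y_{lm}^k-n)=(y_{il}^k-n)(y_{jm}^k-n).
\]
More generally, the \(2\times 2\) minors of the off-diagonal matrix \((y_{ij}^k-n)\) vanish. This is the determinant criterion. After expanding, each such relation says that a combination of the form
\[
y_{ij}^ky_{lm}^k-y_{il}^ky_{jm}^k-n\bigl(y_{ij}^k+y_{lm}^k-y_{il}^k-y_{jm}^k\bigr)=0
\]
holds. With several indices we can eliminate \(n\) between two such relations. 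The result is a vanishing sum of a bounded number of terms, each a product of \(k\)-th powers.

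\emph{Step 2: Mason--Stothers.} We will apply the generalized Mason--Stothers theorem (the Brownawell--Masser/Voloch form for \(S\)-unit-type sums \(u_1+\dots+u_m=0\) with no vanishing subsums) to these sums. Each term is a \(k\)-th power of degree \(kD\), while the radical contributes at most about \(\sum\deg y\). The theorem therefore bounds \(k\deg y\) by roughly \(\binom{m-1}{2}\) times the radical degree. Once \(k\) exceeds a threshold that grows with the number of terms (we expect \(k\ge 18\) to come from the \(m\) arising with five or six indices), this forces a \textbf{vanishing subsum}.

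\emph{Step 3: using the vanishing subsums.} The vanishing subsums are the combinatorial data: each subsum partition gives polynomial relations such as \(y_{ij}^k=c\,y_{il}^k\), i.e.\ proportionalities among the \(a\)'s or among the \(ab+n\). We will then encode the allowed partitions for all quadruples inside a 7-set as a polynomial condition on the indices. The Combinatorial Nullstellensatz should show that some choice of quadruple yields a contradiction: either two \(a_i\) are equal, or all \(a_i\) are proportional to a common \(s\) with \(n=s^2\) a \(k\)-th power, which is the excluded family. In the nonsquare, even-\(k\) case, \(ab+n=(y^{k/2})^2\) gives an extra square factorization. This should remove one further configuration, yielding \(5\).

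\emph{Main obstacle.} The hardest step is Step 3: classifying all vanishing-subsum patterns uniformly in \(n\). This must be done without any degree control, and it must handle zero terms (\(y_{ij}=0\), allowed since \(0\) is a \(k\)-th power) and constant \(a_i\). I would first treat the generic case of no zero products, then handle degenerate cases directly through the rank-one identity.
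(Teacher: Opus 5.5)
There is a genuine gap, and it is where you locate the main obstacle: Step~3 is a placeholder rather than an argument, and the tool you propose for it cannot be applied as described. Vanishing subsums are not contradictory in themselves. Proportional shifted products genuinely occur, for example for $A\subset s\F$, or for any four elements whose cross-ratio is constant, so no uniform ``classification of patterns'' will close the proof. Moreover, the Combinatorial Nullstellensatz needs variables ranging over subsets of a field on which the data depend polynomially; ``a polynomial condition on the indices'' $1,\dots,7$ has no such meaning.

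The missing idea is a reduction to a cross-ratio. Take six distinct elements $a_1,a_2,a_3,b_1,b_2,b_3$ with all $a_ib_j+n\neq0$, and put $P_\sigma=\prod_i(a_ib_{\sigma(i)}+n)=Q_\sigma^k$. Three facts hold:
\begin{enumerate}[(i)]
\item $\sum_\sigma\sgn(\sigma)Q_\sigma^k=0$, because $(a_ib_j+n)$ has rank at most $2$;
\item $\prod_{\sigma\in A_3}P_\sigma=\prod_{\sigma\notin A_3}P_\sigma$;
\item $P_\sigma\neq P_\tau$ whenever $\sigma,\tau$ have opposite signs, since $\sigma^{-1}\tau$ is a transposition and $P_\sigma-P_\tau$ factors as $n(a_r-a_s)(b_u-b_v)$ times a nonzero entry.
\end{enumerate}
For $k\ge18$ the $abc$-input forces every proportionality class of the $P_\sigma$ to have zero signed sum. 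Facts (ii) and (iii) then force a single class, so the cross-ratio $(a_1b_1+n)(a_2b_2+n)/\bigl((a_1b_2+n)(a_2b_1+n)\bigr)$ is constant.

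The theorem therefore becomes a \emph{selection} problem: among $7$ elements outside the exceptional family, find four with nonzero products and nonconstant cross-ratio, then adjoin two more while keeping all nine products nonzero.
\begin{itemize}
\item For nonsquare $n$, the paper picks $\alpha$ with $m=v_\alpha(n)$ odd and splits $A$ according to whether $v_\alpha(a)\ge(m+1)/2$ or $v_\alpha(a)\le(m-1)/2$. The same split gives the bound $5$ for even $k$: two ``high'' elements would make $v_\alpha(ab+n)=m$ odd, so at most one element is high. Your vague ``extra square factorization'' does not give this.
\item For $n=s^2$, one needs $|A\cap s\F|\le2$, and a separate argument when $A$ is not on an affine line $a_0+\F h$; there, three-term relations $H(w_i-w_j)=g_{ij}^k$ force collinearity.
\item Only on an affine line is the Combinatorial Nullstellensatz used, with the parameters $\lambda\in\F$ of $a_0+\lambda h$ as variables.
\end{itemize}

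Steps~1--2 also cannot reach the threshold $k\ge18$. Eliminating $n$ between two $2\times2$-minor relations produces long sums. For instance, the $\{1,2\}\times\{1,2\}$ and $\{1,3\}\times\{1,3\}$ minors of $(a_ib_j+n)$ yield fourteen terms after cancellation. By contrast, the $3\times3$ determinant on six distinct elements eliminates $n$ in one stroke and has only six terms. Even for that six-term relation, the general bounds (Brownawell--Masser, or the Vaserstein--Wheland bound $k<m(m-2)$) only give a contradiction once $k\ge24$. The paper reaches $18$ through the extra multiplicative relation $\prod_{\sigma\in A_3}Q_\sigma=\lambda\prod_{\sigma\notin A_3}Q_\sigma$. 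This relation makes $\sum_\sigma v_\alpha(Q_\sigma)$ even at every finite place, so each local diversity is at most $3\sum_\sigma v_\alpha(Q_\sigma)$. The diversity form of Vaserstein--Wheland then yields $kT\le18T-10$, while configurations with at most five proportionality classes already give $k<15$.
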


When \(n=s^2\) is also a \(k\)-th power, the additional assumption
\(A\not\subset s\F\) is necessary. Indeed, if $n=s^2$ is a $k$-th power and $A\subset s\F$,
then no finite bound is possible since $s\F$ itself is an
infinite Diophantine tuple with property $D_k(n)$: 
for any $c_1,c_2\in \F$,
\[
c_1s \cdot c_2s\ +n=n(c_1c_2+1),
\]
which is again a $k$-th power in $\F[x]$. 

A key feature of the paper is a new selection framework for polynomial
Diophantine tuples. An important ingredient is the following proposition.

\begin{prop}\label{prop:18}
Let \(n\in\F[x]\setminus\{0\}\), and let \(A\subset\F[x]\) be a Diophantine
tuple with property \(D_k(n)\), where \(k\ge18\). There do not exist pairwise
distinct $a_1,a_2,a_3,b_1,b_2,b_3\in A$
such that
\[
 a_i b_j+n\neq0\qquad(1\le i,j\le3)
\]
and
\[
 \frac{(a_1b_1+n)(a_2b_2+n)}
      {(a_1b_2+n)(a_2b_1+n)}\notin\F^\times.
 \]
\end{prop}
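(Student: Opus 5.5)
Write $a_ib_j+n=y_{ij}^k$ with $y_{ij}\in\F[x]\setminus\{0\}$ for $1\le i,j\le 3$. The plan is to apply a Mason--Stothers type bound (a version for sums of several polynomials, of Brownawell--Masser or Voloch type) to a linear relation among the nine $k$-th powers $y_{ij}^k$. The relation comes from a determinant criterion. The $3\times3$ matrix $M=(a_ib_j+n)_{i,j}$ equals $ab^{T}+n\mathbf{1}\mathbf{1}^{T}$, so it has rank at most $2$ and $\det M=0$. Similarly, every $2\times2$ minor factors: for instance
\[
(a_1b_1+n)(a_2b_2+n)-(a_1b_2+n)(a_2b_1+n)=n(a_1-a_2)(b_1-b_2).
\]

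Expanding $\det M=0$ gives a vanishing sum of six terms $\pm\,y_{1\sigma(1)}^k y_{2\sigma(2)}^k y_{3\sigma(3)}^k$ over $\sigma\in S_3$, each a $k$-th power of a product $Y_\sigma=\prod_i y_{i\sigma(i)}$.

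\textbf{Step 1: reduce to a minimal relation without vanishing subsums.} Split the six-term relation into minimal vanishing subsums. We must rule out the case where every minimal subsum forces proportionality. Two-term subsums mean $Y_\sigma^k/Y_\tau^k\in\F^\times$. The hypothesis $\frac{(a_1b_1+n)(a_2b_2+n)}{(a_1b_2+n)(a_2b_1+n)}\notin\F^\times$ says exactly that the transposition pair corresponding to rows $1,2$ and columns $1,2$ is not proportional. Using the symmetry of the $3\times3$ array and the factored minors, I would check combinatorially that some minimal vanishing subsum contains at least $3$ terms that are pairwise non-proportional.

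\textbf{Step 2: apply the generalized Mason--Stothers inequality.} For a nondegenerate relation $\sum_{t=1}^m u_t=0$ with $m\le 6$ and the $u_t$ having no common zero after dividing by their gcd, this gives
\[
\max\deg u_t\le \tfrac{(m-1)(m-2)}{2}\Bigl(\#\text{distinct zeros of }\textstyle\prod u_t\Bigr) + O(1).
\]
Each $u_t=Y_\sigma^k$ has at most $\sum_{i,j}\deg y_{ij}$ distinct zeros. Hence $k\cdot\deg Y_{\sigma}$ is bounded by roughly $\binom{m-1}{2}$ times $\sum_{i,j}\deg y_{ij}$, which we compare with $\sum_\sigma \deg Y_\sigma$.

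\textbf{Step 3: compare degrees.} Each $y_{ij}$ lies in exactly two of the six permutations. So summing over the terms of the subsum, or taking the maximum, bounds the total degree appropriately. With $m=6$ the constant is $10$, so we need to control $\sum_{ij}\deg y_{ij}$ by a multiple of $\max\deg Y_\sigma$, which holds with factor $\le 3/2$ or so. This yields a contradiction once $k\ge 18$, presumably $10\cdot\tfrac{9}{5}=18$ after balancing. For smaller subsums ($m=3,4,5$) the constants are smaller and the bound is easier.

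\textbf{Main obstacle.} The gcd must be removed first, because common factors of the $Y_\sigma$ are not $k$-th-power controlled. The resulting polynomials are no longer pure $k$-th powers, so I would use the radical-counting form of the inequality, in which only the distinct zeros matter. The degree bookkeeping also has to be done carefully so that the constant comes out to exactly $18$.
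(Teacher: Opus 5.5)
\textbf{There is a genuine gap in Steps 2--3: the threshold $k\ge 18$ does not follow from the argument you describe.} The degree bookkeeping is wrong. The even permutations cover each cell $(i,j)$ exactly once, so $\prod_{i,j}y_{ij}=\prod_{\sigma\in A_3}Y_\sigma$ and $\sum_{i,j}\deg y_{ij}=\sum_{\sigma\in A_3}\deg Y_\sigma$. This can equal $3\max_\sigma\deg Y_\sigma$, not $\tfrac32\max_\sigma\deg Y_\sigma$; take all $y_{ij}$ squarefree, pairwise coprime and of equal degree.

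After removing the gcd, write $q_\sigma=Y_\sigma/\gcd(Y_\tau)_\tau$ and $T=\max\deg q_\sigma$. Every zero $\alpha$ then has $\sum_\sigma v_\alpha(q_\sigma)\ge2$, so the radical count is at most $\tfrac12\sum_\sigma\deg q_\sigma\le 3T$. A Brownawell--Masser bound with constant $\binom52=10$ therefore only gives $kT\le 10(3T-1)$, i.e.\ $k<30$. The sum-of-radicals form $\deg y_0<(m-1)\sum_j\nu(y_j)$ gives only $k<24$. The ``$10\cdot\tfrac95=18$'' has no justification. Separately, the gcd is not an obstacle: $\gcd(Y_\sigma^k)=\gcd(Y_\sigma)^k$, so after dividing you still have pure $k$-th powers $q_\sigma^k$.

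\textbf{The missing idea is local.} Radical counting charges every bad place the full $\binom52=10$. The paper instead uses Vaserstein--Wheland's linearly independent form
$$\deg y_0\le -m(m-1)+\sum_{\alpha}\Delta_\alpha(y_1,\ldots,y_m),$$
where $\Delta_\alpha$ is the diversity of the valuation pattern at $\alpha$. This is combined with the product identity $\prod_{\sigma\in A_3}q_\sigma=\lambda\prod_{\sigma\notin A_3}q_\sigma$, which forces $s_\alpha=\sum_\sigma v_\alpha(q_\sigma)$ to be even.
\begin{itemize}
\item If $s_\alpha=2$, the pattern of the six terms is $0,0,0,0,k,k$, and $\Delta_\alpha=\binom52-4=6=3s_\alpha$.
\item If $s_\alpha\ge4$, the trivial bound gives $\Delta_\alpha\le 10\le 3s_\alpha$.
\end{itemize}
Summing gives $\sum_{\alpha\in\F}\Delta_\alpha\le 3\sum_\sigma\deg q_\sigma\le 18T$. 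With $\Delta_\infty\le10$ and the $-20$ term, this yields $kT\le 18T-10$. That is exactly where $18$ comes from. When fewer classes survive ($3\le t\le5$), the sum-of-radicals bound gives $k<t(t-2)\le15$. This case analysis also requires that the six-term relation be minimal, which those smaller cases supply.

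\textbf{Step 1 is also incomplete.} You must first collapse the six terms into proportionality classes and show that some class has nonzero signed sum; working with minimal vanishing subsums of the raw terms does not do this. The factored $2\times2$ minors only rule out proportional pairs of opposite sign, and hence the $2+2+2$ partition. To rule out $3+3$ and $2+4$ partitions in which every class sum vanishes, you need the same product identity. It gives $(R/S)^\delta\in\F^\times$ with $\delta\neq0$ for class representatives $R,S$, which forces $R/S\in\F^\times$, a contradiction.
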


Proposition~\ref{prop:18} is the structural core of the proof.  It rules out a
``generic'' \(3\times 3\) configuration inside a generalized Diophantine tuple:
namely, six elements \(a_1,a_2,a_3,b_1,b_2,b_3\) for which all products
\(a_i b_j+n\) are nonzero and one natural cross-ratio is nonconstant.  Its proof
combines determinant identities arising from the Diophantine condition with
function-field \(abc\)-type estimates, in particular generalizations of the
Mason--Stothers theorem.

The proof of Theorem~\ref{thm:main} then separates into two parts.  First, we
establish Proposition~\ref{prop:18}.  Second, we prove selection results showing
that every tuple of size at least \(7\), unless it belongs to the evident
exceptional family, must contain such a generic \(3\times 3\) configuration.
The nonsquare case is handled by a valuation argument.  In the square case, this
valuation argument no longer suffices; it would only give a bound linear in
\(\deg n\).  We therefore use an additional geometric selection argument, together
with the Combinatorial Nullstellensatz to handle the case where the tuple lies in
an affine \(\F\)-line.

The same underlying ideas also give a conditional improvement for generalized Diophantine tuples over the integers.  In Section~\ref{sec:integer-warmup}, we prove that, assuming the Lander--Parkin--Selfridge conjecture, one has
\[
M_k(n)\le 5
\]
for all \(k\ge 7\) and all nonzero integers \(n\).  This significantly improves the aforementioned result of Croot and Yip~\cite{CY26}, which gives \(M_k(n)\le 21738\) for all \(k\ge 25\) under the same conjecture.

\medskip

\textbf{Organization of the paper.} In Section~\ref{sec:integer-warmup}, as a warm-up, we prove a conditional analogue for generalized Diophantine tuples over the integers. In Section~\ref{sec:prelim}, we list some preliminary tools and prove some auxiliary results. In Section~\ref{sec:framework}, we prove Proposition~\ref{prop:18}. In Section~\ref{sec:thm1}, we prove Theorem~\ref{thm:main} when $n$ is a nonsquare. In Section~\ref{sec:thm2}, we prove Theorem~\ref{thm:main} when $n$ is a square. 

\section{A warm-up: a conditional integer analogue}
\label{sec:integer-warmup}

We start with a conditional analogue over the integers, which serves as a simple
model case for the polynomial argument.  Its proof is considerably simpler, but
it follows the same broad outline: a low-rank relation among shifted products
produces a short equality of \(k\)-th powers, which is then ruled out by the
Lander--Parkin--Selfridge conjecture.  In the polynomial setting, the analogous
obstruction requires the function-field and selection arguments developed later.

We first recall the following conjecture of Lander, Parkin, and Selfridge \cite{LPS67}.

\begin{conj}[Lander--Parkin--Selfridge conjecture]\label{conj:LPS}
Let \(r,s,k\) be positive integers. If
\[
        \sum_{i=1}^{r} a_i^k=\sum_{j=1}^{s} b_j^k,
\]
where \(a_1,\ldots,a_r,b_1,\ldots,b_s\) are positive integers such that
\(a_i\neq b_j\) for all \(1\le i\le r\) and \(1\le j\le s\), then
\[
        r+s\ge k.
\]
\end{conj}

\begin{thm}\label{thm:integer}
Let \(k\ge 7\) and let \(n\in \mathbb Z\setminus\{0\}\). Assume
Conjecture~\ref{conj:LPS}. Then
\[
        M_k(n)\le 5.
\]
\end{thm}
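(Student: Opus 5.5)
The plan is to show that no set $A\subseteq\mathbb N$ with property $D_k(n)$ has six elements, by extracting a short nontrivial equality of $k$-th powers from a vanishing determinant and then invoking Conjecture~\ref{conj:LPS}. This gives $M_k(n)\le5$.

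\textbf{Step 1: a vanishing determinant.} Suppose $A$ contains six distinct elements. Label them $a_1<a_2<a_3$ and $b_1<b_2<b_3$ (the labelling will be fixed in Step 2). Put $M_{ij}=a_ib_j+n$ for $1\le i,j\le3$. Then
\[
M=(a_ib_j)_{i,j}+n\,(1)_{i,j}
\]
is the sum of two rank-one matrices, so $\det M=0$. By hypothesis each $M_{ij}=m_{ij}^k$ with $m_{ij}$ a nonnegative integer. Hence each term of the expansion is
\[
P_\sigma=\prod_i M_{i\sigma(i)}=\Bigl(\prod_i m_{i\sigma(i)}\Bigr)^k ,
\]
a $k$-th power of a nonnegative integer. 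The relation $\det M=0$ therefore reads
\[
\sum_{\sigma\ \mathrm{even}}P_\sigma=\sum_{\sigma\ \mathrm{odd}}P_\sigma ,
\]
three $k$-th powers on each side. We discard the zero terms and cancel equal terms appearing on opposite sides. If both sides remain nonempty, we have an identity of the form in Conjecture~\ref{conj:LPS} with $a_i\ne b_j$ and $r+s\le6<7\le k$, which is a contradiction. If exactly one side becomes empty, a sum of positive integers equals $0$, which is absurd.

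\textbf{Step 2: ruling out total cancellation.} This is the main obstacle: we must exclude the case where the multiset $\{P_\sigma:\sigma \text{ even}\}$ coincides with $\{P_\sigma:\sigma\text{ odd}\}$ (zeros included). My plan is to show that one permutation gives a strictly unique maximal term $P_\sigma>0$. A strictly unique maximum occurs once in the combined multiset, so the two multisets cannot agree.

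For $n>0$, the function $f(a,b)=\log(ab+n)$ satisfies $\partial^2 f/\partial a\partial b=n/(ab+n)^2>0$, so it is strictly supermodular on positive reals. The rearrangement (exchange) argument then shows that $\sum_i f(a_i,b_{\sigma(i)})$ is uniquely maximized by the sorted matching $\sigma=\mathrm{id}$. Strictness holds because the $a_i$ are distinct, as are the $b_j$: any non-identity $\sigma$ has an inversion, and uncrossing it strictly increases the sum.

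For $n<0$ the mixed derivative is negative, so the anti-sorted matching $\sigma(i)=4-i$ is the unique maximizer among terms with all factors positive. One has to be more careful here, since zero factors can occur (a factor $ab+n=0$ with $ab=-n$). I would first check that the anti-sorted term is positive. Each product $a_ib_{4-i}$ pairs a small element with a large one, and distinctness forbids more than one pair $\{a,b\}$ with $ab=-n$ among suitably chosen entries. If needed, I would exploit the freedom in splitting the six elements into the $a$'s and $b$'s, for instance $\{c_1,c_3,c_5\}$ versus $\{c_2,c_4,c_6\}$ after sorting $A$, to avoid zero entries on the maximizing diagonal. Once the maximal term is positive, every term containing a zero factor is strictly smaller. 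Among the fully positive terms, strict submodularity gives uniqueness of the maximizer, so the exchange argument goes through as in the case $n>0$.

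\textbf{Conclusion.} With total cancellation excluded, Step 1 yields a nontrivial relation among at most six $k$-th powers of positive integers with no term shared between the two sides. Since $k\ge7>6$, this contradicts Conjecture~\ref{conj:LPS}. Hence $|A|\le5$, that is, $M_k(n)\le5$.
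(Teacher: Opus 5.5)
Your proof works, and it differs from the paper's at Step 2. The paper never cancels anything. For $\sigma,\tau$ of opposite sign with $P_\sigma P_\tau\neq 0$ it uses the identity
\[
P_\sigma-P_\tau=n(a_r-a_s)(b_u-b_v)(a_tb_{\sigma(t)}+n)\neq 0,
\]
so no positive base appears on both sides. It also shows some $P_\sigma$ is nonzero, because no two zero entries of $M$ share a row or column. That argument is independent of the sign of $n$ and uses no ordering, and the same identity is reused over $\F[x]$ in the proof of Proposition~\ref{prop:18}.

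You instead allow partial cancellation. This is harmless, because you apply Conjecture~\ref{conj:LPS} to the reduced identity, whose two sides are then disjoint. You then only need to exclude total cancellation, which you do with a rearrangement argument giving a strictly unique largest term. This relies on the order of $\mathbb R$ and a case split on the sign of $n$, so it would not carry over to the polynomial setting. It is, however, a legitimate alternative here. The exchange step can even be done purely algebraically, via $(xy'+n)(x'y+n)-(xy+n)(x'y'+n)=-n(x'-x)(y'-y)$.

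One step needs repair. For $n<0$, your reason why the anti-sorted term $(a_1b_3+n)(a_2b_2+n)(a_3b_1+n)$ is positive does not work as written. Distinctness only forces zero entries into distinct rows and columns, which does not stop them from lying on the anti-diagonal.

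The correct reason is nonnegativity. Every entry is a $k$-th power of a nonnegative integer, so $a_1b_1+n\ge 0$. Since $a_ib_j>a_1b_1$ for $(i,j)\neq(1,1)$, every entry other than the $(1,1)$ entry is strictly positive. In particular the anti-sorted term is positive, and no re-splitting of the six elements is needed.

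Your exchange argument then finishes the proof. Starting from any term with all factors positive, swapping a non-inverted pair strictly increases the two-factor product. So all factors stay positive along the way, and the anti-sorted term is the unique maximum. With that one-line fix, the proposal is complete.
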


\begin{proof}
Suppose, for contradiction, that there is a set \(A\subset \mathbb N\) with
property \(D_k(n)\) and \(|A|\ge 6\). Choose six distinct elements of \(A\),
denoted
\[
        a_1,a_2,a_3,b_1,b_2,b_3.
\]
Since these six elements are distinct, for every \(1\le i,j\le 3\) we may write
\[
        a_i b_j+n=x_{ij}^k,
        \qquad x_{ij}\in \mathbb Z_{\ge 0}.
\]
The matrix \(M=(a_i b_j+n)_{1\le i,j\le 3}\) has rank at most \(2\), since its
\(j\)-th column is
\[
        b_j(a_1,a_2,a_3)^T+n(1,1,1)^T.
\]
Thus 
\[
        \det(M)=\det(x_{ij}^k)_{1\le i,j\le 3}=0.
\]
For \(\sigma\in S_3\), put
\[
        X_\sigma=\prod_{i=1}^3 x_{i,\sigma(i)}.
\]
Expanding the determinant gives
\begin{equation}\label{eq:integer-det}
        \sum_{\sigma\in A_3} X_\sigma^k
        =
        \sum_{\sigma\notin A_3} X_\sigma^k,
\end{equation}
where \(A_3\) denotes the alternating subgroup of \(S_3\).

We first show that not all \(X_\sigma\) vanish. Observe that no two zero
entries of the matrix \(M\) can lie in the same row
or in the same column. Indeed,
\[
        a_i b_j+n=a_i b_{j'}+n=0
\]
would imply \(a_i(b_j-b_{j'})=0\), impossible; the column case is similar.
Hence there exists a permutation \(\sigma\in S_3\) avoiding all zero entries,
and for this permutation we have \(X_\sigma>0\). Therefore the common value
of the two sides of equation~\eqref{eq:integer-det} is positive. Thus, after deleting
the zero terms, both sides of equation~\eqref{eq:integer-det} contain at least one
positive term.

We next show that no positive base occurs on both sides of
equation~\eqref{eq:integer-det}. Define
\[
        P_\sigma=X_\sigma^k
        =
        \prod_{i=1}^3 (a_i b_{\sigma(i)}+n).
\]
Let \(\sigma,\tau\in S_3\) have opposite signs, and suppose
\(X_\sigma X_\tau\ne 0\). Then \(\sigma^{-1}\tau\) is a transposition. Thus
there exist distinct \(r,s\in\{1,2,3\}\), with remaining index \(t\), such
that
\[
        \tau(r)=\sigma(s),\qquad
        \tau(s)=\sigma(r),\qquad
        \tau(t)=\sigma(t).
\]
Writing \(u=\sigma(r)\) and \(v=\sigma(s)\), we obtain
\[
\begin{aligned}
P_\sigma-P_\tau
&=
(a_t b_{\sigma(t)}+n)
\Bigl((a_r b_u+n)(a_s b_v+n)
      -(a_r b_v+n)(a_s b_u+n)\Bigr)  \\
&=
n(a_r-a_s)(b_u-b_v)(a_t b_{\sigma(t)}+n).
\end{aligned}
\]
This is nonzero, since \(n\ne0\), the chosen elements are pairwise distinct, and \(X_\sigma\ne0\) implies \(a_t b_{\sigma(t)}+n\ne0\). Therefore \(P_\sigma\ne P_\tau\), and hence \(X_\sigma\ne X_\tau\), whenever \(\sigma\) and \(\tau\) have opposite signs and the corresponding terms are positive.

Consequently, after deleting the zero terms from equation~\eqref{eq:integer-det}, we obtain a nontrivial equality of positive integer \(k\)-th powers with no common base on the two sides. The total number of terms is at most \(6\). By Conjecture~\ref{conj:LPS}, this total number must be at least \(k\), which is impossible since \(k\ge 7\). This contradiction proves the theorem.
\end{proof}

It is worth noting that the later polynomial argument is not obtained by applying a direct function-field analogue of Conjecture~\ref{conj:LPS}. Such an analogue is already false over an algebraically closed field. Indeed, since \(\mathbb F\) is
algebraically closed, if \(\zeta\ne 1\) is a \(k\)-th root of unity and \(f\in\mathbb F[x]\setminus\{0\}\), then $f^k=(\zeta f)^k$ although \(f\ne \zeta f\).  Thus equalities of \(k\)-th powers over
\(\mathbb F[x]\) have unavoidable degeneracies.  In the polynomial setting, the role of Conjecture~\ref{conj:LPS} will instead be played by function-field estimates, together with the selection arguments developed below.

A similar caveat applies to the remaining exponents \(2\le k\le 17\).  Although uniformity results for rational points can be used in the integer setting for fixed \(k\) (see for example \cite[Remark 2.12]{CY26}), function-field uniformity results generally have to account for isotrivial phenomena and depend on a degeneracy locus; see Caporaso~\cite{C02}. Thus, it appears that they do not directly give the degree-independent bounds needed here.

\section{Preliminaries}\label{sec:prelim}

\subsection{\(v_{\alpha}\)-valuations}
For \(\alpha\in \F\) and \(f\in \F[x]\), we denote by \(v_\alpha(f)\) the order of vanishing of \(f\) at \(x=\alpha\). Equivalently, \(v_\alpha(f)=m\) if \(m\) is the largest nonnegative integer for which
\[
    f=(x-\alpha)^m g
\]
for some \(g\in \F[x]\). We use the convention \(v_\alpha(0)=\infty\).

Next we record an important property that follows from the definition. Let \(\alpha\in \F\) and \(f,g\in \F[x]\). Then
\[
    v_{\alpha}(f+g)\geq \min\{v_{\alpha}(f),v_{\alpha}(g)\}.
\]
Moreover, equality holds whenever \(v_{\alpha}(f)\neq v_{\alpha}(g)\).

\begin{lem}\label{lem:valuation-ratio}
Let \(\alpha\in\F\). Let \(F_0,E_1,E_2\in\F[x]\), with \(F_0\neq 0\), and suppose
that
\[
    v_\alpha(E_1)>v_\alpha(F_0),
    \qquad
    v_\alpha(E_2)>v_\alpha(F_0).
\]
If $E_1\neq E_2$, then
\[
    \frac{F_0+E_1}{F_0+E_2}\notin \F^\times.
\]
\end{lem}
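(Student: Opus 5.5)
We argue by contradiction: suppose the ratio equals a constant $c\in\F^\times$, so that
\[
F_0+E_1=c\,(F_0+E_2),\qquad\text{equivalently}\qquad (1-c)F_0=cE_2-E_1 .
\]
First, neither denominator nor numerator vanishes. Since $v_\alpha(E_2)>v_\alpha(F_0)$, the valuation property gives $v_\alpha(F_0+E_2)=v_\alpha(F_0)<\infty$, so $F_0+E_2\neq0$ and the quotient is defined. The same reasoning gives $F_0+E_1\neq 0$.

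We then split according to whether $c=1$.

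\textbf{Case $c=1$.} The relation becomes $E_1=E_2$, which contradicts the hypothesis $E_1\neq E_2$.

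\textbf{Case $c\neq1$.} The left-hand side $(1-c)F_0$ is nonzero, and since $1-c\in\F^\times$,
\[
v_\alpha\bigl((1-c)F_0\bigr)=v_\alpha(F_0).
\]
On the right-hand side, multiplying by the unit $c$ does not change valuations, so the ultrametric inequality gives
\[
v_\alpha(cE_2-E_1)\ge\min\{v_\alpha(E_1),v_\alpha(E_2)\}>v_\alpha(F_0).
\]
The two sides of the relation therefore have different valuations at $\alpha$, which is again a contradiction.

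Both cases are impossible, so the ratio is not in $\F^\times$. There is no real obstacle here. The only points needing care are checking that the quotient is well defined, and observing that constants in $\F^\times$ have valuation $0$, so they do not change $v_\alpha$.
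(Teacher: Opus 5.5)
Your proposal is correct and is essentially the paper's proof: both write the ratio as $c\in\F^\times$, rule out $c=1$ using $E_1\neq E_2$, and then compare $v_\alpha$ of the two sides of $(1-c)F_0=cE_2-E_1$. Your additional check that $F_0+E_2\neq 0$, so that the quotient is well defined, is a small extra the paper leaves implicit.
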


\begin{proof}
Suppose otherwise that
\[
    F_0+E_1=C(F_0+E_2)
\]
for some \(C\in\F^\times\). Since $E_1\neq E_2$, we have $C\neq 1$ and 
\[
    (1-C)F_0=CE_2-E_1.
\]
However, the left-hand side has \(v_\alpha\)-valuation \(v_\alpha(F_0)\), while the
right-hand side has valuation strictly larger than \(v_\alpha(F_0)\), a
contradiction. 
\end{proof}

\subsection{Combinatorial Nullstellensatz}\label{subsec:CN}
The Combinatorial Nullstellensatz, introduced by Alon in 1999 \cite{A99}, is a powerful algebraic technique for proving existence results in combinatorics, number theory, and finite geometry.
We record the following version of Combinatorial Nullstellensatz \cite[Lemma 2.1]{A99}.
\begin{lem}
[Combinatorial Nullstellensatz] \label{lem:CN}
Let \(P=P(x_1,x_2,\dots,x_m)\) be a polynomial in \(m\) variables over an arbitrary field \(K\). Suppose that for each \(1\le i\le m\), the degree of \(P\) as a polynomial in \(x_i\) is at most \(t_i\), and let \(S_i\subseteq K\) be a set of at least \(t_i+1\) distinct elements of \(K\). If
\[
P(x_1,x_2,\dots,x_m)=0
\]
for all \((x_1,\dots,x_m)\in S_1\times S_2\times\cdots\times S_m\), then $P$ is the zero polynomial.
\end{lem}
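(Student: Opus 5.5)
This is the classical ``grid'' form of the Combinatorial Nullstellensatz, and I would prove it by induction on the number of variables \(m\), using only the fact that a nonzero univariate polynomial over a field has at most as many roots as its degree.

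For \(m=1\), \(P\in K[x_1]\) has degree at most \(t_1\) and vanishes on \(S_1\), which has at least \(t_1+1\) distinct elements. A nonzero polynomial of degree at most \(t_1\) has at most \(t_1\) roots in a field, so \(P=0\).

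For the inductive step, suppose \(m\ge 2\) and the statement holds for \(m-1\) variables. Since \(\deg_{x_m}P\le t_m\), I expand \(P\) in powers of \(x_m\):
\[
P(x_1,\dots,x_m)=\sum_{j=0}^{t_m} P_j(x_1,\dots,x_{m-1})\,x_m^j ,
\]
with \(P_j\in K[x_1,\dots,x_{m-1}]\). Each monomial of \(P_j\) comes from a monomial of \(P\), so \(\deg_{x_i}P_j\le t_i\) for every \(1\le i\le m-1\). This inheritance of the individual degree bounds is the only point that needs checking; it is what allows the induction hypothesis to apply.

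Now fix any \((c_1,\dots,c_{m-1})\in S_1\times\cdots\times S_{m-1}\). The univariate polynomial
\[
Q(x_m)=\sum_{j=0}^{t_m} P_j(c_1,\dots,c_{m-1})\,x_m^j
\]
has degree at most \(t_m\) and, by hypothesis, vanishes at every point of \(S_m\), which has at least \(t_m+1\) elements. By the case \(m=1\), \(Q\) is the zero polynomial, so \(P_j(c_1,\dots,c_{m-1})=0\) for every \(j\). Since \((c_1,\dots,c_{m-1})\) was arbitrary, each \(P_j\) vanishes on all of \(S_1\times\cdots\times S_{m-1}\). The induction hypothesis then gives \(P_j=0\) for every \(j\), and therefore \(P=0\).

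I do not expect a genuine obstacle. Apart from the degree bookkeeping for the coefficients \(P_j\), the argument is just the root count for one-variable polynomials applied fiberwise. It works over an arbitrary field \(K\), so no use of algebraic closure or characteristic \(0\) is needed.
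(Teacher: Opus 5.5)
Your proof is correct and is the standard argument from Alon's paper (Lemma 2.1 there), which the paper cites rather than reproves: induct on the number of variables, expand in the last variable, apply the one-variable root bound on each fiber, and then apply the induction hypothesis to the coefficient polynomials. As you note, it works over an arbitrary field. The paper needs exactly that, since it applies the lemma over $\F(x)$ with the grid $\Lambda^4\subset\F^4$.
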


First, we use the Combinatorial Nullstellensatz over $\F(x)$ to prove the following lemma. 

\begin{lem}
\label{lem:affine-7}
Let $\F$ be a field with characteristic zero. Let \(\Lambda\subset \F\) satisfy
\(|\Lambda|\geq 7\). Let \(p,q\in \F(x)\), with \(q\neq 0\), and suppose
that \(p\) and \(q\) are not both constant. For variables \(\lambda,\mu\), consider the polynomial
\[
        B(\lambda,\mu)=(p+\lambda q)(p+\mu q)+1\in \F(x)[\lambda,\mu].
\]
Then there exist pairwise distinct
\[
        \lambda_1,\lambda_2,\mu_1,\mu_2\in \Lambda
\]
such that
\[
        B(\lambda_i,\mu_j)\neq 0\qquad (1\leq i,j\leq 2),
\]
and such that
\[
        \frac{B(\lambda_1,\mu_1)B(\lambda_2,\mu_2)}
             {B(\lambda_1,\mu_2)B(\lambda_2,\mu_1)}
\]
is nonconstant.
\end{lem}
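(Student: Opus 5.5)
We encode all the required conditions as the nonvanishing of one polynomial in four variables over the field $K=\F(x)$, and then apply the Combinatorial Nullstellensatz (Lemma~\ref{lem:CN}) with every $S_i=\Lambda$. Write $B_{ij}=B(\lambda_i,\mu_j)\in K[\lambda_1,\lambda_2,\mu_1,\mu_2]$, and let $'$ denote the derivation $d/dx$, extended coefficientwise to $K[\lambda_1,\lambda_2,\mu_1,\mu_2]$. Since $\operatorname{char}\F=0$, a nonzero element of $\F(x)$ has zero derivative exactly when it is constant. Hence, once all $B_{ij}\neq0$, the ratio $B_{11}B_{22}/(B_{12}B_{21})$ is nonconstant if and only if the Wronskian-type expression
\[
D=(B_{11}B_{22})'\,B_{12}B_{21}-B_{11}B_{22}\,(B_{12}B_{21})'
\]
is nonzero. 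We therefore consider
\[
P=D\cdot B_{11}B_{12}B_{21}B_{22}\cdot(\lambda_1-\mu_1)(\lambda_1-\mu_2)(\lambda_2-\mu_1)(\lambda_2-\mu_2).
\]
The factors $\lambda_1-\lambda_2$ and $\mu_1-\mu_2$ are not needed. If $\lambda_1=\lambda_2$ or $\mu_1=\mu_2$, the ratio is identically $1$, so $D=0$; thus $D\neq0$ already forces these distinctions.

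Degree count. Each $B_{ij}$ has degree $1$ in each of its two variables. Consider $\lambda_1$, which appears only in $B_{11}$ and $B_{12}$.
\begin{itemize}
\item In $D$ it has degree at most $2$, since differentiation in $x$ does not raise $\lambda$-degrees.
\item In $B_{11}B_{12}B_{21}B_{22}$ it has degree $2$.
\item In the four linear difference factors it has degree $2$.
\end{itemize}
So $P$ has degree at most $6$ in $\lambda_1$, and by symmetry in each of the four variables. Since $|\Lambda|\ge7$, Lemma~\ref{lem:CN} gives a point of $\Lambda^4$ where $P\neq0$ as soon as $P$ is not the zero polynomial. At such a point:
\begin{itemize}
\item all $B_{ij}\neq0$;
\item $\lambda_i\neq\mu_j$ for all $i,j$;
\item $D\neq0$, which gives $\lambda_1\neq\lambda_2$, $\mu_1\neq\mu_2$, and a nonconstant ratio.
\end{itemize}

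It remains to show $P\neq0$. Since $K[\lambda_1,\lambda_2,\mu_1,\mu_2]$ is a domain, it suffices to check each factor. The differences are clearly nonzero. Each $B_{ij}$ has $\lambda_i\mu_j$-coefficient $q^2\neq0$.

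The main point is $D\neq0$, and I would prove it by extracting a top coefficient in $\lambda_2,\mu_2$. Write
\[
B_{22}=q^2\lambda_2\mu_2+\cdots,\qquad B_{12}=q(p+\lambda_1q)\mu_2+\cdots,\qquad B_{21}=q(p+\mu_1q)\lambda_2+\cdots .
\]
Then $B_{11}B_{22}$ and $B_{12}B_{21}$ both have degree exactly $1$ in each of $\lambda_2$ and $\mu_2$, with $\lambda_2\mu_2$-coefficients
\[
F=q^2B_{11},\qquad G=q^2(p+\lambda_1q)(p+\mu_1q).
\]
Consequently the $\lambda_2^2\mu_2^2$-coefficient of $D$ is $F'G-FG'$. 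This coefficient vanishes exactly when $F/G=1+\bigl((p+\lambda_1q)(p+\mu_1q)\bigr)^{-1}$ is constant in $x$, viewed as an element of $K(\lambda_1,\mu_1)$. That would mean $(p+\lambda_1q)(p+\mu_1q)\in\F(\lambda_1,\mu_1)$. Setting $\lambda_1=\mu_1$ gives $p+\lambda q$ constant in $x$ up to a square root, and comparing the coefficients of $\lambda_1\mu_1$ and of $\lambda_1^0\mu_1^0$ gives $q^2,p^2\in\F$. Since $\F$ is algebraically closed, this forces $p,q\in\F$, contradicting the hypothesis that $p,q$ are not both constant.

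I expect this last step to be the main obstacle. The care needed is in justifying that the top bihomogeneous coefficient of a Wronskian is the Wronskian of the top coefficients. I would check this directly from the bilinear structure: the product of the two leading coefficients (each in $\lambda_2\mu_2$) is the only contribution to $\lambda_2^2\mu_2^2$.
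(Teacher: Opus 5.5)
Your proof is correct and follows the paper's route: the same auxiliary polynomial (the Wronskian $D$ times $\prod B_{ij}$ times $\prod(\lambda_i-\mu_j)$), the same bound of degree at most $6$ in each variable, and the same Combinatorial Nullstellensatz step. The only variation is in showing $D\neq0$: you extract the $\lambda_2^2\mu_2^2$-coefficient $F'G-FG'$ and reduce to $\bigl((p+\lambda_1q)(p+\mu_1q)\bigr)'=0$, whereas the paper uses $\mathcal N-\mathcal M=q^2(X_1-X_2)(Y_1-Y_2)$ to reduce to $(\mathcal M/q^2)'=0$ and then reads off the coefficients $q^2$ and $pq$.

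One small correction: this lemma assumes only $\operatorname{char}\F=0$, not that $\F$ is algebraically closed. So conclude $p,q\in\F$ from $(q^2)'=2qq'=0$ and $(pq)'=p'q=0$, or from the fact that $\F$ is algebraically closed in $\F(x)$. The remark about setting $\lambda_1=\mu_1$ is not needed and can be dropped.
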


\begin{proof}
Let \(X_1,X_2,Y_1,Y_2\) be independent variables. For $1\leq i,j\leq 2$, set
\[
        \mathcal B_{ij}=B(X_i,Y_j) \in \F(x)[X_1,X_2,Y_1,Y_2].
\]

Put
\[
        \mathcal N=\mathcal B_{11}\mathcal B_{22},
        \qquad
        \mathcal M=\mathcal B_{12}\mathcal B_{21},
\]
and define
\[
        \mathcal W=\mathcal N'\mathcal M-\mathcal N\mathcal M',
\]
where the prime denotes differentiation with respect to \(x\). For each of the variables \(X_1,X_2,Y_1,Y_2\), the polynomials
\(\mathcal N,\mathcal M,\mathcal N'\), and \(\mathcal M'\) have degree at most
\(1\) in that variable. Hence $\mathcal W$ has degree at most \(2\) in each variable.

\begin{claim}
\(\mathcal W\neq 0\) as an element of $\F(x)[X_1,X_2,Y_1,Y_2].$
\end{claim}
\begin{poc}
For $1\leq i,j\leq 2$,
\[
        U_i=p+X_iq,\qquad V_j=p+Y_jq.
\]
Then
\[
\begin{aligned}
        \mathcal N-\mathcal M
        &=(U_1V_1+1)(U_2V_2+1)
          -(U_1V_2+1)(U_2V_1+1)  \\
        &=(U_1-U_2)(V_1-V_2) 
        =q^2(X_1-X_2)(Y_1-Y_2).
\end{aligned}
\]
Suppose, for contradiction, that \(\mathcal W=0\). Then
\[
        \left(\frac{\mathcal N}{\mathcal M}\right)'=\frac{\mathcal W}{\mathcal M^2}=0.
\]
Since
\[
        \frac{\mathcal N}{\mathcal M}-1
        =
        \frac{\mathcal N-\mathcal M}{\mathcal M}
        =
        \frac{q^2(X_1-X_2)(Y_1-Y_2)}{\mathcal M},
\]
and \((X_1-X_2)(Y_1-Y_2)\) is independent of \(x\), it follows that
\begin{equation}\label{eq:derivative}
        \left(\frac{\mathcal M}{q^2}\right)'=0.
\end{equation}
Now compare two coefficients in the identity~\eqref{eq:derivative}. Note that we have
\[
        B(X,Y)=q^2XY+pq(X+Y)+p^2+1,
\]
and
\[
        \frac{\mathcal M}{q^2}
        =
        \frac{B(X_1,Y_2)B(X_2,Y_1)}{q^2}.
\]
Thus, the coefficient of the monomial \(X_1X_2Y_1Y_2\) in
\(\mathcal M/q^2\) is \(q^2\) and the coefficient of the monomial \(X_1X_2Y_1\) in
\(\mathcal M/q^2\) is \(pq\). It follows that
\[
        (q^2)'=0, \qquad (pq)'=0.
\]
Since \(q\neq 0\), the first identity gives
\(q'=0\). The second identity then gives \(p'q=0\), hence \(p'=0\).
Thus \(p\) and \(q\) are both constant, contradicting the hypothesis.
\end{poc}

Now define
\[
        \Psi
        =
        \mathcal W \cdot 
        \prod_{i=1}^2\prod_{j=1}^2 (X_i-Y_j)
        \cdot \prod_{i=1}^2\prod_{j=1}^2 \mathcal B_{ij}
        \in \F(x)[X_1,X_2,Y_1,Y_2].
\]
This is a nonzero polynomial. Moreover, each variable has degree at most \(6\)
in \(\Psi\): the factor \(\mathcal W\) contributes degree at most \(2\), the
factor \(\prod(X_i-Y_j)\) contributes degree \(2\), and the factor
\(\prod \mathcal B_{ij}\) contributes degree at most \(2\). Thus
\[
        \deg_{X_1}\Psi,\deg_{X_2}\Psi,
        \deg_{Y_1}\Psi,\deg_{Y_2}\Psi\leq 6.
\]
Since \(|\Lambda|\geq 7\), Lemma~\ref{lem:CN} gives a point
\[
        (\lambda_1,\lambda_2,\mu_1,\mu_2)\in \Lambda^4
\]
such that
\[
        \Psi(\lambda_1,\lambda_2,\mu_1,\mu_2)\neq 0.
\]

After this specialization, write
\[
        B_{ij}=B(\lambda_i,\mu_j),\qquad
        N=B_{11}B_{22},\qquad
        M=B_{12}B_{21}.
\]
The nonvanishing of \(\Psi\) gives
\[
        B_{ij}\neq 0,\qquad \lambda_i\neq\mu_j,\qquad N'M-NM'\neq 0
        \qquad (1\leq i,j\leq 2).
\]
If \(\lambda_1=\lambda_2\), then \(N=M\), and hence \(N'=M'\), a contradiction.
Thus \(\lambda_1\neq\lambda_2\). The same argument gives
\(\mu_1\neq\mu_2\). Hence the four chosen elements are pairwise distinct.
Finally,
\[
        \left(\frac{N}{M}\right)'
        =
        \frac{N'M-NM'}{M^2}
        \neq 0,
\]
since \(M\neq 0\). Therefore \(N/M\) is nonconstant, as required.
\end{proof}

Next we deduce the following corollary.
\begin{cor}
\label{cor:affine}
Let \(s\in \F[x]\setminus\{0\}\), and let \(A\subset \F[x]\). Suppose that $A \not \subset s\F$,
\[
 |A|\geq 7,
\]
 and
\[
A \subset \{a_0+\lambda h: \lambda \in \F\}
\]
for some $a_0\in \F[x]$ and \(h\in \F[x]\setminus \{0\}\). Then there exist pairwise distinct
\[
 a_1,a_2,b_1,b_2\in A
\]
such that
\[
 a_i b_j+s^2\neq 0\qquad (1\leq i,j\leq 2),
\]
and
\[
 \frac{(a_1b_1+s^2)(a_2b_2+s^2)}
        {(a_1b_2+s^2)(a_2b_1+s^2)}
 \notin \F^\times.
\]
\end{cor}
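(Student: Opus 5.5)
We reduce Corollary~\ref{cor:affine} to Lemma~\ref{lem:affine-7} by rescaling by $s$. Write each element of $A$ as $a_0+\lambda h$ with $\lambda\in\Lambda$, where $\Lambda\subset\F$ is the set of parameters that occur. Since $h\neq 0$, the map $\lambda\mapsto a_0+\lambda h$ is injective, so $|\Lambda|=|A|\geq 7$. Put
\[
p=\frac{a_0}{s},\qquad q=\frac{h}{s}\in\F(x),
\]
so that $q\neq 0$. For $\lambda,\mu\in\Lambda$ we have
\[
(a_0+\lambda h)(a_0+\mu h)+s^2=s^2\bigl((p+\lambda q)(p+\mu q)+1\bigr)=s^2B(\lambda,\mu),
\]
with $B$ as in Lemma~\ref{lem:affine-7}.

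The only hypothesis of the lemma left to check is that $p$ and $q$ are not both constant. Suppose instead that $p=c_0$ and $q=c_1$ are both in $\F$. Then $a_0=c_0s$ and $h=c_1s$, so every element $a_0+\lambda h=(c_0+\lambda c_1)s$ lies in $s\F$. This contradicts $A\not\subset s\F$. The point to get right is that $p,q$ live in $\F(x)$ rather than $\F[x]$. That is harmless, because Lemma~\ref{lem:affine-7} is stated for $p,q\in\F(x)$ and its proof only uses the derivation on $\F(x)$ and the Combinatorial Nullstellensatz over the field $\F(x)$.

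Lemma~\ref{lem:affine-7} then gives pairwise distinct $\lambda_1,\lambda_2,\mu_1,\mu_2\in\Lambda$ with $B(\lambda_i,\mu_j)\neq 0$ and with nonconstant cross-ratio. Set $a_i=a_0+\lambda_ih$ and $b_j=a_0+\mu_jh$. These four elements of $A$ are pairwise distinct by injectivity. Moreover $a_ib_j+s^2=s^2B(\lambda_i,\mu_j)\neq 0$. In the cross-ratio the four factors $s^2$ cancel, so
\[
\frac{(a_1b_1+s^2)(a_2b_2+s^2)}{(a_1b_2+s^2)(a_2b_1+s^2)}=\frac{B(\lambda_1,\mu_1)B(\lambda_2,\mu_2)}{B(\lambda_1,\mu_2)B(\lambda_2,\mu_1)}.
\]
The right-hand side is a nonconstant element of $\F(x)$, hence not in $\F^\times$, which is exactly the conclusion of the corollary.

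I expect no serious obstacle. The only step needing care is the nonconstancy check above, together with the observation that the lemma applies to rational functions $p,q$.
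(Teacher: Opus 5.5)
Your proposal is correct and matches the paper's proof essentially step for step. It uses the same rescaling $p=a_0/s$, $q=h/s$, the same deduction from $A\not\subset s\F$ that $p,q$ are not both constant, and the same identity $a_ib_j+s^2=s^2B(\lambda_i,\mu_j)$ to transfer the conclusion of Lemma~\ref{lem:affine-7}; you simply spell out the injectivity and nonconstancy details a bit more explicitly.
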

\begin{proof}
We can write
\[
 A=\{a_0+\lambda h:\lambda\in\Lambda\},
\]
where \(\Lambda\) is some subset of $\F$ with
\[
 |\Lambda|=|A|\geq 7.
\]
Define $p,q\in \F(x)$ by
\[
 p=a_0/s,\qquad q=h/s.
\]
Then \(q\neq 0\). Since \(A\not \subset s\F\), \(p\) and \(q\) are not both constant. 

Consider the polynomial
\[
 B(\lambda,\mu)=(p+\lambda q)(p+\mu q)+1 \in \F(x) [\lambda, \mu].
\]
By Lemma~\ref{lem:affine-7}, there exist pairwise distinct
\[
 \lambda_1,\lambda_2,\mu_1,\mu_2\in \Lambda
\]
such that
\[
 B(\lambda_i,\mu_j)\neq 0\qquad (1\leq i,j\leq 2),
\]
and
\[
 \frac{B(\lambda_1,\mu_1)B(\lambda_2,\mu_2)}
        {B(\lambda_1,\mu_2)B(\lambda_2,\mu_1)}
\]
is nonconstant. For $1\leq i,j\leq 2$, set
\[
 a_i=a_0+\lambda_i h, \qquad 
 b_j=a_0+\mu_j h.
\]
Since
\[
a_ib_j+s^2=(a_0+\lambda_i h)(a_0+\mu_j h)+s^2=s^2B(\lambda_i,\mu_j)
\]
for $1\leq i,j\leq 2$, the four elements $a_1,a_2,b_1,b_2$ are as required.
\end{proof}

This corollary will handle the square case when $A$ lies in an affine $\F$-line.

\subsection{An extension lemma}

In view of Proposition~\ref{prop:18}, the following extension lemma is useful. Indeed, to apply Proposition~\ref{prop:18}, we proceed with proof by contradiction. Suppose $A$ is a generalized Diophantine tuple with property $D_k(n)$ and it is large. First, we find pairwise distinct $a_1,a_2,b_1,b_2 \in A$ such that 
\[
 \frac{(a_1b_1+n)(a_2b_2+n)}
      {(a_1b_2+n)(a_2b_1+n)}\notin\F^\times,
 \]
and then extend it by adjoining two more elements of $A$ using the following extension lemma. Doing so would contradict Proposition~\ref{prop:18}.

\begin{lem}
\label{lem:extension-no-C}
Let \(n\in \F[x]\setminus\{0\}\), let \(A\subset \F[x]\) satisfy
\[
 |A|\geq 7,
\]
and let \(a_1,a_2,b_1,b_2\in A\) be pairwise distinct. Suppose that
\[
 a_i b_j+n\neq 0\qquad (1\leq i,j\leq 2).
\]
Then there exist \(a_3,b_3\in A\) such that
\[
 a_1,a_2,a_3,b_1,b_2,b_3
\]
are pairwise distinct, and
\[
 a_i b_j+n\neq 0\qquad (1\leq i,j\leq 3).
\]
\end{lem}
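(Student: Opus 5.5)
The plan is to choose $a_3$ and $b_3$ one at a time from $A\setminus\{a_1,a_2,b_1,b_2\}$, which has at least $3$ elements since $|A|\ge7$. The key observation, already used in the proof of Theorem~\ref{thm:integer}, is the following. For a fixed nonzero $c\in\F[x]$, the equation $cy+n=0$ has at most one solution $y\in\F[x]$, since $c y=c y'$ forces $y=y'$. If $c=0$, then $cy+n=n\neq0$, so there are no solutions. In other words, each element $c$ \emph{forbids} at most one partner $y$ with $cy+n=0$.

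First I choose $a_3$. It must satisfy $a_3b_1+n\neq0$ and $a_3b_2+n\neq0$. By the observation, $b_1$ forbids at most one element of $A$ and $b_2$ forbids at most one element of $A$. So at most $2$ of the at least $3$ remaining elements are excluded, and I pick $a_3$ among the survivors.

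Next I choose $b_3\in A\setminus\{a_1,a_2,b_1,b_2,a_3\}$, a set with at least $2$ elements. It must satisfy $a_ib_3+n\neq0$ for $i=1,2,3$. Each $a_i$ forbids at most one element, so up to $3$ elements could be excluded. This means a naive count could fail: two candidates against three forbidden values.

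This is the main obstacle, and I would resolve it by using the freedom in choosing $a_3$. Let $f_i$ denote the element with $a_if_i+n=0$, whenever it exists. Only $a_1$ and $a_2$ are fixed, so they forbid at most two candidates for $b_3$; call these $f_1,f_2$. Let $R=A\setminus\{a_1,a_2,b_1,b_2\}$, with $|R|\ge3$. Then the set $R'=R\setminus\{f_1,f_2\}$ of candidates admissible for $b_3$ relative to $a_1,a_2$ has at least $|R|-2\ge1$ element. Split into cases.

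If $|R'|\ge2$, pick any valid $a_3\in R$. Then at most one element of $R'$ is equal to $a_3$, and at most one is forbidden by $a_3$. To make this count suffice, choose $a_3$ inside $R'$ when possible: this removes one candidate and forbids at most one more. That needs $|R'|\ge3$ in the worst case, so I would instead argue more carefully using the symmetric pairing. The relation $yz+n=0$ is symmetric, so each element of $A$ has at most one ``partner''. I would model this as a graph on $A$ whose edges are the pairs $\{y,z\}$ with $yz+n=0$ and $y\ne z$. This graph is a matching. I then need $a_3,b_3\in R$ distinct, non-adjacent to each other, with $a_3$ not adjacent to $b_1,b_2$ and $b_3$ not adjacent to $a_1,a_2$.

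Since the graph is a matching, the vertices $b_1,b_2$ are matched to at most two vertices of $R$, and likewise $a_1,a_2$ are matched to at most two vertices of $R$. If some $r\in R$ is unmatched into $\{a_1,a_2,b_1,b_2\}$, then every other $r'\in R$ works with $r$ in one of the two roles, unless $r'$ is matched to $r$ itself. I will finish with a short case check. Take three elements $r_1,r_2,r_3\in R$. Each is adjacent to at most one outside vertex, and the matching condition limits the possible adjacencies among them. Exhausting these configurations should produce a valid ordered pair $(a_3,b_3)$. If some configuration resisted this, I would fall back on the freedom of swapping the labels $a_3\leftrightarrow b_3$, which is where the hypothesis $|A|\ge7$ should be exactly enough.
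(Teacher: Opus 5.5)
Your matching-graph reformulation is sound and is essentially the paper's. The pairs $\{y,z\}$, $y\neq z$, with $yz+n=0$ form a matching, and this is exactly what yields the paper's disjointness $B\cap C=\varnothing$, where $B,C$ are the elements of $R=A\setminus\{a_1,a_2,b_1,b_2\}$ partnered with $b_1,b_2$ and with $a_1,a_2$, respectively.

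Your first case is correct. If some $r\in R$ has no partner among $a_1,a_2,b_1,b_2$, then any other $r'\in R$ that is not the partner of $r$ can be paired with $r$: place $r'$ as $a_3$ if its partner is some $a_i$, and as $b_3$ otherwise. Such an $r'$ exists because $|R|\ge 3$ and $r$ has at most one partner.

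However, the argument stops there. The complementary case, where every element of $R$ is partnered with one of $a_1,a_2,b_1,b_2$, is left to a case check you never carry out (``should produce'', ``if some configuration resisted this, I would fall back on swapping the labels''). Swapping labels is not by itself an argument, so as written the proof has a gap in exactly that case.

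The missing step is a short pigeonhole. Since the graph is a matching, at most two elements of $R$ are partnered with $a_1$ or $a_2$, and at most two with $b_1$ or $b_2$. As $|R|\ge 3$ and every element of $R$ is partnered with one of the four, both kinds must occur. Take $a_3\in R$ partnered with some $a_i$, and $b_3\in R$ partnered with some $b_j$. Uniqueness of partners then gives $a_3\neq b_3$, $a_3b_j+n\neq 0$ for $j=1,2,3$, and $a_ib_3+n\neq 0$ for $i=1,2$, which finishes the proof.

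The paper organizes the same count by whether $B$ or $C$ is empty. With this step added, your version is equally valid.
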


\begin{proof}
Put
\[
 R=A\setminus\{a_1,a_2,b_1,b_2\}.
\]
Then \(|R|\ge3\). Let
\[
 B=\{r\in R: rb_1+n=0\text{ or }rb_2+n=0\},
 \qquad
 C=\{r\in R: a_1r+n=0\text{ or }a_2r+n=0\}.
\]
Clearly, we have \(|B|,|C|\le2\), and \(B\cap C=\varnothing\).

It remains to choose \(z\in R\setminus B\) and \(w\in R\setminus C\) with
\(z\neq w\) and \(zw+n\neq0\). If \(B\neq\varnothing\), choose
\(w\in B\) and \(z\in R\setminus B\) as follows. When \(|B|=1\), fix
\(w\in B\); since \(|R\setminus B|\ge2\), one can choose \(z\in R\setminus B\)
with \(zw+n\neq0\). When \(|B|=2\), choose any \(z\in R\setminus B\), and then
choose \(w\in B\) with \(zw+n\neq0\). In both cases \(w\notin C\), because
\(B\cap C=\varnothing\), and \(z\neq w\).

If \(B=\varnothing\) but \(C\neq\varnothing\), the symmetric argument gives
\(z\in C\subset R\setminus B\) and \(w\in R\setminus C\) with
\(zw+n\neq0\). Finally, if \(B=C=\varnothing\), fix \(w\in R\); among the at
least two elements of \(R\setminus\{w\}\), at most one can satisfy
\(zw+n=0\). Choose any other one as \(z\).

Set \(a_3=z\) and \(b_3=w\). Then \(z,w\in R\) and \(z\neq w\), so the six
chosen elements are pairwise distinct. The conditions \(z\notin B\),
\(w\notin C\), and \(zw+n\neq0\), together with the original hypotheses, give
\(a_i b_j+n\neq0\) for all \(1\le i,j\le3\).
\end{proof}

\section{Proof of Proposition~\ref{prop:18}}\label{sec:framework}
In this section, we prove Proposition~\ref{prop:18}. To prove the proposition, we first convert the Diophantine condition
into determinant relations among shifted products, then use a Mason--Stothers type theorem to force vanishing of certain sums. 

\subsection{Applications of Mason--Stothers type theorems} In this subsection, we use Mason--Stothers type theorems to study equations arising from Diophantine tuples.

The Mason--Stothers theorem \cite{M84,S81}, also known as the ABC theorem over function fields, is a fundamental tool in the study of polynomial equations and Diophantine problems over function fields. It has inspired many extensions and refinements; see, for example, Brownawell--Masser \cite{BM86}, Shapiro--Sparer \cite{SS94}, Hu--Yang \cite{HY02}, Vaserstein--Wheland \cite{VW03}, and Croot--Hart \cite{CH10}.

Here we record a generalization by Vaserstein--Wheland \cite[Theorem 2.2(a)]{VW03}.

\begin{lem}[Vaserstein--Wheland]\label{lem:VW}
Let \(\F\) be an algebraically closed field of characteristic \(0\), let \(m\geq 2\), and let
\(y_0,y_1,\ldots,y_m\in \F[x]\setminus\{0\}\). Suppose that
\[
        y_1+\cdots+y_m=y_0,
        \qquad
        \gcd(y_1,\ldots,y_m)=1,
\]
that not all of \(y_0,y_1,\ldots,y_m\) are constant, and that no nonempty
subsum of \(y_1,\ldots,y_m\) vanishes. Then
\[
        \deg y_0
        <
        (m-1)\sum_{j=0}^{m}\nu(y_j),
\]
where \(\nu(y)\) denotes the number of distinct zeros of \(y\).
\end{lem}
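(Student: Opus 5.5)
The plan is to follow the Wronskian method of Brownawell--Masser, refined so that the loss factor is linear in \(m\) rather than quadratic.

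\emph{Setup and reduction to a basis.} Put \(S=\{y_0,y_1,\dots,y_m\}\) and let \(N=\sum_{j=0}^m\nu(y_j)\). Let \(Z\) be the set of distinct zeros of \(\prod_j y_j\), so \(|Z|\le N\). Choose a maximal \(\F\)-linearly independent subfamily \(y_{i_1},\dots,y_{i_r}\) of \(y_1,\dots,y_m\). We have \(r\le m\), and \(r\ge 2\): if \(r=1\), all \(y_i\) are constant multiples of one polynomial, and \(\gcd=1\) forces them all to be constant; then \(y_0\) is constant too, contradicting the hypothesis. By construction \(y_0\) is an \(\F\)-combination of the basis.

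If \(r<m\), I would try to induct on \(m\). Any further linear relation among \(y_1,\dots,y_m\) can be used to eliminate a summand. The no-vanishing-subsum hypothesis is exactly what keeps the resulting shorter sum nondegenerate, and the \(\gcd\) condition must be rechecked after dividing out common factors. The bound \((m-1)N\) is monotone in \(m\), so the induction loses nothing. Hence the main case is \(y_1,\dots,y_m\) linearly independent, i.e.\ \(r=m\).

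\emph{Wronskian estimate.} Let \(W=W(y_1,\dots,y_m)\neq 0\), which is nonzero by linear independence in characteristic \(0\). Using \(y_0=\sum y_i\) and column operations, \(W\) also equals, up to sign, the Wronskian in which any single \(y_i\) is replaced by \(y_0\). I would then establish two facts.

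\begin{enumerate}[(i)]
\item \emph{Upper bound.} \(\deg W\le \sum_{i\ne l}\deg y_i+\deg y_0-\binom{m}{2}\) for each \(l\), using the column containing \(y_0\). This follows by expanding the determinant.
\item \emph{Divisibility.} For each \(\alpha\in Z\) and each of the \(m+1\) choices of which \(y_j\) is omitted, \(W\) has order at \(\alpha\) at least \(\sum_{j\ne l}v_\alpha(y_j)-\binom{m}{2}\). This comes from writing \(W/\prod_{j\ne l}y_j\) as a polynomial in logarithmic derivatives \(y_j^{(t)}/y_j\), each of which has a pole of order at most \(t\) at \(\alpha\).
\end{enumerate}

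The refinement that gives the factor \(m-1\) is to choose, for each \(\alpha\), the index \(l\) maximizing \(v_\alpha(y_l)\). This is legitimate because at most one \(y_j\) can fail to vanish to the largest order, since \(\gcd=1\) among the summands. Combining (i) with (ii) summed over \(\alpha\), and over \(\alpha=\infty\) for degrees, the valuations cancel. What remains is
\[
\deg y_0\le (m-1)|Z|-\binom{m}{2}+(\text{boundary term}),
\]
which should give the strict inequality \(\deg y_0<(m-1)N\).

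\emph{Main obstacle.} The delicate step is this local bookkeeping. We must ensure that at each zero \(\alpha\) the loss is at most \(m-1\), rather than the naive \(\binom{m}{2}\) or \(m(m-1)\). This requires the choice of omitted index to depend on \(\alpha\), together with the identity that all \(m+1\) ``omit one'' Wronskians coincide up to sign. If this sharpening resists, I would fall back on citing the Brownawell--Masser inequality, accepting a weaker constant, and then check whether that weaker constant still suffices for the application with \(k\ge18\).
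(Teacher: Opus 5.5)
The paper does not prove this lemma; it quotes it from Vaserstein--Wheland (Theorem 2.2(a)). Your Wronskian plan is the right kind of argument for the linearly independent case, but the local estimate (ii) is the wrong one, and the proposed refinement cannot repair it.

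The logarithmic-derivative bound $v_\alpha(W)\ge\sum_{j\ne l}v_\alpha(y_j)-\binom{m}{2}$ loses $\binom{m}{2}$ at each zero, whichever column you omit, because the loss comes from the derivative orders in the rows. Summing it therefore gives only a Brownawell--Masser-type bound $\deg y_0\le\binom{m}{2}(|Z|-1)$, which does not imply the lemma.

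Your justification for the choice of $l$ is false at every $\alpha\in Z$. Since $y_0-y_1-\cdots-y_m=0$ and some $y_j$ is nonzero at $\alpha$, the minimal valuation $0$ is attained by at least two terms. So at least two terms fail to vanish to the largest order. Moreover, omitting the term of largest order is the worst choice; you should omit one with $v_\alpha(y_l)=0$.

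Nor is ``loss at most $m-1$ per zero'' attainable. For $y_1=x^2$, $y_2=-2x$, $y_3=1$, $y_0=(x-1)^2$ one has $W(y_1,y_2,y_3)=4$, so the loss at $\alpha=0$ is $3>m-1$.

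The missing idea is the column-wise truncated estimate. If $e_j=v_\alpha(y_j)$, then every derivative of $y_j$ of order at most $m-1$ vanishes to order at least $e_j-(m-1)$ at $\alpha$. Hence $v_\alpha(W)\ge\sum_{j\ne l}\max(0,e_j-m+1)$. Choosing $l=l(\alpha)$ with $e_l=0$ and summing over $\alpha$ gives $\deg W\ge\sum_{j=0}^m\deg y_j-(m-1)\sum_{j=0}^m\nu(y_j)$. Combined with $\deg W\le\sum_{i=1}^m\deg y_i-\binom{m}{2}$, this yields $\deg y_0\le(m-1)\sum_j\nu(y_j)-\binom{m}{2}$ directly, with no boundary term. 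The loss is $m-1$ per pair $(\alpha,j)$ with $y_j(\alpha)=0$, which is exactly why the right-hand side is $\sum_j\nu(y_j)$ rather than $|Z|$.

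The dependent case is also not the routine induction you describe. Substituting a linear dependence can kill coefficients, can reduce $y_0$ to a scalar multiple of one summand, and can leave the surviving summands with a common factor, since $\gcd=1$ is assumed only for the whole family.

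For example, take $f,g$ coprime and nonproportional and $b\ne0,-1$. The family $y_1=f$, $y_2=g$, $y_3=bg-f$, $y_0=(1+b)g$ satisfies every hypothesis. Yet eliminating $y_1$ or $y_3$ via $y_1+y_3=by_2$ leaves $y_0=(1+b)y_2$, about which the inductive hypothesis says nothing. One must switch to a different relation. Here that is the Mason relation $y_1+y_3=by_2$; in general, if $y_0=cy_{j_1}$ then $c\ne1$ and $\sum_{1\le j\le m,\,j\ne j_1}y_j=(c-1)y_{j_1}$ again satisfies the hypotheses. The common-factor situation still needs a genuine argument, and this is the part Vaserstein--Wheland actually have to prove.

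Finally, the Brownawell--Masser fallback would not suffice for the paper. In the proof of Corollary~\ref{cor:VW}, replacing the factor $s-2$ by $\binom{s-1}{2}$ (and $\sum\nu$ by $|Z|\le sT$) gives only $k<s(s-1)(s-2)/2$. That is $k<30$ for the five-term relations that must be excluded, so the threshold $k\ge18$ would be lost.
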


Next, we use Lemma~\ref{lem:VW} to deduce a corollary that is more convenient to apply.

\begin{cor}\label{cor:VW}
Let \(\F\) be an algebraically closed field of characteristic \(0\).
Let \(m\geq 3\). Let \(p_1,\ldots,p_m\in \F[x]\setminus\{0\}\), and let
\(c_1,\ldots,c_m\in \F\), not all zero, such that \[\sum_{i=1}^m c_i p_i^k=0.\] Assume that for some \(i_0\in\{1,\ldots,m\}\), we have \(c_{i_0}\neq 0\) and $p_{i_0}/p_j \notin \F$ for all $j\neq i_0$. Then \[k<m(m-2).\]
\end{cor}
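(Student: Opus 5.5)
Our plan is to reduce the relation $\sum c_i p_i^k=0$ to an instance of Lemma~\ref{lem:VW}, applied to a minimal vanishing subsum that contains the distinguished index $i_0$.

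First, discard every index with $c_i=0$. The remaining relation still contains $i_0$.

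Next, choose a subset $I\ni i_0$ of the surviving indices, minimal among subsets containing $i_0$, such that $\sum_{i\in I}c_ip_i^k=0$. Such a subset exists because the full surviving set is a candidate. By minimality, no proper nonempty subsum of $\{c_ip_i^k\}_{i\in I}$ vanishes. To see this, suppose a proper nonempty $J\subsetneq I$ had vanishing subsum. Then $I\setminus J$ would also have vanishing subsum. One of $J$ and $I\setminus J$ contains $i_0$, and that set is a strictly smaller candidate, contradicting minimality.

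Let $r=|I|$. We have $r\ge 2$ because $c_{i_0}p_{i_0}^k\ne0$. If $r=2$, then $c_{i_0}p_{i_0}^k=-c_jp_j^k$, so $(p_{i_0}/p_j)^k\in\F^\times$. Since a rational function whose $k$-th power is constant is itself constant, this gives $p_{i_0}/p_j\in\F$, contrary to the hypothesis. Hence $3\le r\le m$.

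Now put $g=\gcd(p_i:i\in I)$ and replace each $p_i$ by $p_i/g$. This divides the relation by $g^k$, so it still holds, the summands $c_ip_i^k$ become coprime as a family, and the ratios $p_{i_0}/p_j$ are unchanged. We apply Lemma~\ref{lem:VW} with $m'=r-1$ and
\[
y_i=c_ip_i^k\quad(i\in I\setminus\{i_0\}),\qquad y_0=-c_{i_0}p_{i_0}^k .
\]
The hypotheses check out as follows.
\begin{itemize}
\item The gcd of $\{y_i\}_{i\ne i_0}$ equals $1$. Indeed, any common root of these would also divide $y_0$, contradicting the coprimality of the whole family.
\item No subsum of the $y_i$ vanishes, by the minimality established above.
\item Not all $y_j$ are constant. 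If they all were, then all $p_i$ would be constant, forcing $p_{i_0}/p_j\in\F$.
\end{itemize}

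The main obstacle is that Lemma~\ref{lem:VW} bounds only $\deg y_0$, whereas the natural count uses the maximal degree. We handle this by symmetry. The relation is symmetric in $I$, and any index could play the role of $y_0$, since the hypotheses above do not depend on which index is moved to the left-hand side. We therefore choose as $y_0$ the summand whose $p_i$ has maximal degree $D$. Then $\deg y_0=kD$. Since $\nu(y_j)=\nu(p_j)\le\deg p_j\le D$, summing over $r$ terms gives
\[
kD<(r-2)\cdot rD .
\]
We have $D>0$ because not all $p_i$ are constant. Hence
\[
k<r(r-2)\le m(m-2).
\]
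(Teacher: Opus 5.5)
Your proof is correct and takes essentially the same route as the paper. You pass to a minimal vanishing subrelation containing the $i_0$-th term, which gives at least three terms and no vanishing proper subsum. You then divide out the gcd, move the summand of maximal degree to the other side, and apply Lemma~\ref{lem:VW} using $\nu(p_i^k)\le \deg p_i\le D$. The one cosmetic difference is that you minimize over subsets of the original relation with its original coefficients, whereas the paper minimizes over all relations involving $p_{i_0}^k$ with arbitrary nonzero coefficients; either choice gives the no-vanishing-subsum property that is needed.
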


\begin{proof}
By relabeling the indices, we may assume \(i_0=1\). Choose a non-trivial relation involving
\(p_1^k\) with the smallest possible number of terms:
\[
        \lambda_1p_{i_1}^k+\lambda_2p_{i_2}^k\cdots+\lambda_s p_{i_s}^k=0,
\]
where $1=i_1<i_2<i_3\cdots<i_s\leq m$ and $\lambda_1, \ldots, \lambda_s\in \F^\times$.  
Then \(s\geq 3\): the cases \(s=1\) and \(s=2\) are impossible, since
\(s=2\) would imply \((p_1/p_{i_2})^k\in\F\), and hence \(p_1/p_{i_2}\in\F\), as
\(\F\) is algebraically closed. Moreover, by the minimality of \(s\), no nonempty proper subsum of
\[
        \lambda_1p_{i_1}^k+\lambda_2p_{i_2}^k\cdots+\lambda_s p_{i_s}^k
\]
vanishes; otherwise, either that subsum or its complement would give a
shorter relation still involving \(p_1^k\).

Let $d=\gcd(p_{i_1},\ldots,p_{i_s})$ and $q_j=p_{i_j}/d$ for $1\leq j \leq s$. 
Then
\[
        \lambda_1q_1^k+\cdots+\lambda_s q_s^k=0,
        \qquad
        \gcd(q_1,\ldots,q_s)=1.
\]
The \(q_i\)'s are not all constant; otherwise \(p_1/p_{i_j}=q_1/q_j\in\F\)
for every \(2\leq j\leq s\), contradicting the hypothesis.

Put
\[
        T=\max_{1\leq i\leq s}\deg q_i \geq 1.
\]
After relabelling the indices, we may assume \(\deg q_s=T\). Set
\[
        y_0=-\lambda_s q_s^k,
        \qquad
        y_i=\lambda_iq_i^k
        \quad (1\leq i\leq s-1).
\]
Then
\[
        y_1+\cdots+y_{s-1}=y_0.
\]
Moreover, no nonempty subsum of \(y_1,\ldots,y_{s-1}\) vanishes. Also,
\[
        \gcd(y_1,\ldots,y_{s-1})=1,
\]
because any common divisor of \(y_1,\ldots,y_{s-1}\) also divides
\(y_0\), and hence divides all \(q_i^k\).

Applying Lemma~\ref{lem:VW} with \(M=s-1\), we obtain
\[
        kT=\deg y_0
        <
        (s-2)\sum_{i=1}^{s}\nu(q_i^k)\leq (s-2)\sum_{i=1}^{s} \deg(q_i)\leq s(s-2)T.
\]
It follows that $k<s(s-2)\leq m(m-2)$, as required. 
\end{proof}

We also need a stronger version of Lemma~\ref{lem:VW} from Vaserstein--Wheland \cite{VW03}. To state it, we need to introduce some necessary terminology. Following \cite[Section 3]{VW03}, for any integer \(d\geq 1\) and any sequence
\(k_1,\ldots,k_d\) of integers, we define its \emph{diversity} $\Delta(k_1,\ldots,k_d)$ as follows:
\begin{equation}\label{eq:Delta}
        \Delta(k_1,\ldots,k_d)
        =
        \frac{d(d-1)}{2}
        -
        \min_{(\ell_1,\ldots,\ell_d)\in\mathcal X}
        \sum_{j=1}^d(\ell_j-k_j),
\end{equation}
where \(\mathcal X\) is the set of all
sequences \(\ell_1,\ldots,\ell_d\) of pairwise distinct integers such that $\ell_j\geq k_j$ for $1\leq j \leq d$.

Now let \(y_1,\ldots,y_d\in \F[x]\) be linearly independent over \(\F\), and put
\[
        y_0=y_1+\cdots+y_d.
\]
For each \(\alpha\in \F\), let
\[
        k_0\leq k_1\leq \cdots \leq k_d
\]
be the numbers
\[
        v_\alpha(y_0), v_{\alpha}(y_1), \ldots, v_{\alpha}(y_d)
\]
written in nondecreasing order. We define
\[
        \Delta_\alpha(y_1,\ldots,y_d)
        =
        \Delta(k_1,\ldots,k_d).
\]
Similarly, let
\[
        k_0\leq k_1\leq \cdots \leq k_d
\]
be the numbers
\[
        v_\infty(y_j):=-\deg(y_j)\qquad (0\leq j\leq d)
\]
written in nondecreasing order. We define
\[
        \Delta_\infty(y_1,\ldots,y_d)
        =
        \Delta(k_1,\ldots,k_d).
\]

It is immediate from the definition~\eqref{eq:Delta} that
\[
        0\leq \Delta(k_1,\ldots,k_d)\leq \binom d2.
\]
Consequently,
\[
        0\leq \Delta_\alpha(y_1,\ldots,y_d)\leq \binom d2
\]
for every \(\alpha\in \F \cup \{\infty\}\). Also, note that for all but finitely many \(\alpha \in \F \cup \{\infty\}\), we have \[\Delta_\alpha(y_1,\ldots,y_d)=0.\]

The following lemma is from \cite[Theorem 3.1]{VW03}.

\begin{lem}[Vaserstein--Wheland, linearly independent form]
\label{lem:VW-independent}
Let \(\F\) be an algebraically closed field of characteristic \(0\), let \(m\geq 2\), and let
$y_1,\ldots,y_m\in \F[x]\setminus\{0\}$
be linearly independent over \(\F\). Put
\[
 y_0=y_1+\cdots+y_m,
\]
and assume
\[
 \gcd(y_1,\ldots,y_m)=1.
\]
Then
\[
 \deg y_0
 \leq
 -m(m-1)
 +
 \sum_{\alpha\in \F \cup \{\infty\}}
 \Delta_\alpha(y_1,\ldots,y_m).
\]
\end{lem}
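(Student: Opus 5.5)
This statement is \cite[Theorem 3.1]{VW03}, and in the paper I would simply quote it. If a self-contained argument is wanted, I would prove it by the Wronskian method, a refinement of the Mason--Stothers proof. The one step I have not checked is the endgame: my bookkeeping bounds the smallest of $\deg y_0,\ldots,\deg y_m$, not $\deg y_0$ itself.

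\textbf{Step 1: the Wronskian.} Let $W=W(y_1,\ldots,y_m)$. Since $\operatorname{char}\F=0$ and $y_1,\ldots,y_m$ are linearly independent over $\F$, we have $W\neq0$. The relation $y_0=y_1+\cdots+y_m$ and multilinearity of the determinant show that the Wronskian of any $m$ of the $m+1$ polynomials $y_0,y_1,\ldots,y_m$ equals $\pm W$. Also, replacing a family by an invertible $\F$-linear combination of it multiplies the Wronskian by a nonzero constant. So at each place I am free to choose which $m$ functions to use and to change basis within their span.

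\textbf{Step 2: finite places.} Fix $\alpha\in\F$ with ordered valuations $k_0\le k_1\le\cdots\le k_m$.
\begin{itemize}
\item Take the $m$ functions realising $k_1,\ldots,k_m$, i.e.\ drop one of smallest valuation.
\item By Gaussian elimination in $(x-\alpha)$-adic expansions, pass to a basis of their span whose valuations $\ell_1,\ldots,\ell_m$ are pairwise distinct with $\ell_j\ge k_j$.
\item For functions with distinct valuations $\ell_j$, the Wronskian has valuation at least $\sum_j\ell_j-\binom m2$, since the leading term is a Vandermonde in the $\ell_j$.
\item By the definition \eqref{eq:Delta}, $\sum_j(\ell_j-k_j)\ge\binom m2-\Delta_\alpha$.
\end{itemize}
Together these give
\[
 v_\alpha(W)\ge \sum_{j=1}^m k_j(\alpha)-\Delta_\alpha(y_1,\ldots,y_m).
\]
The hypothesis $\gcd(y_1,\ldots,y_m)=1$ forces $k_0(\alpha)=0$ for every $\alpha\in\F$. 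Hence summing over $\alpha\in\F$ gives
\[
\deg W\ \ge\ \sum_{j=0}^m\deg y_j-\sum_{\alpha\in\F}\Delta_\alpha.
\]

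\textbf{Step 3: infinity.} Do the same with $v_\infty=-\deg$. Drop the function with the largest degree and rebase to distinct degrees $e_j\le d_j$, where $d_1\ge\cdots\ge d_m$ are the remaining degrees. Differentiation lowers degree by one, which costs another $\binom m2$, so
\[
\deg W\le \sum_j e_j-\binom m2\le \sum_j d_j-2\binom m2+\Delta_\infty.
\]

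\textbf{Step 4: combining, and the main obstacle.} Comparing the lower bound from Step 2 with the upper bound from Step 3, the $m$ retained degrees cancel. What remains is that the smallest degree among $y_0,\ldots,y_m$ is at most $-m(m-1)+\sum_{\alpha\in\F\cup\{\infty\}}\Delta_\alpha$.

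To get the statement as written, I need to pass from this smallest-degree bound to a bound for $\deg y_0$. This is the step I expect to be hardest, because $\deg y_0$ need not be the minimum. I would first try a finer choice of which function to omit at infinity, exploiting the fact that the sign of $W$ is irrelevant there. In the paper's application, however, it suffices to cite \cite[Theorem 3.1]{VW03} directly.
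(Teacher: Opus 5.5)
Your primary plan matches the paper's: the paper does not prove this lemma but quotes it from \cite[Theorem 3.1]{VW03}.

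Your optional Wronskian sketch goes beyond the paper, and it is in fact complete. The ``main obstacle'' you flag in Step 4 comes from misreading your own bookkeeping. In Step 3 you discard, as the definition of $\Delta_\infty$ requires, the function with the smallest $v_\infty$. That is a function of \emph{largest} degree, say $y_{j^*}$. Step 3 therefore reads $\deg W\le\sum_{j\neq j^*}\deg y_j-m(m-1)+\Delta_\infty$. Step 2 reads $\deg W\ge\sum_{j=0}^m\deg y_j-\sum_{\alpha\in\F}\Delta_\alpha$. Subtracting leaves exactly $\deg y_{j^*}$, so you have proved
\[
\max_{0\le j\le m}\deg y_j\le -m(m-1)+\sum_{\alpha\in\F\cup\{\infty\}}\Delta_\alpha(y_1,\ldots,y_m).
\]
This is stronger than the stated bound, since $\deg y_0\le\max_j\deg y_j$. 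No finer choice at infinity is needed. In the paper's only use of the lemma, Proposition~\ref{prop:six-singleton}, $y_0$ is chosen to have maximal degree anyway.

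One step deserves to be written out: the matching $\ell_j\ge k_j$ after Gaussian elimination. The $j$-th smallest valuation occurring in the span is at least $k_j$. This holds because the $m-j+1$ retained functions of valuation $\ge k_j$ are linearly independent, so the subspace of elements with valuation $\ge k_j$ has dimension at least $m-j+1$. Hence the sorted rebased valuations form an admissible element of $\mathcal X$. The analogous count with degrees handles the place at infinity.
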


Next, we apply Lemma~\ref{lem:VW-independent} to a system of equations of a special form. Such a system will appear in our proof.

\begin{prop}
\label{prop:six-singleton}
Let \(q_1,\ldots,q_6\in \F[x]\setminus\{0\}\) be pairwise nonproportional over \(\F^\times\).
Suppose
\[
 q_1^k+q_2^k+q_3^k-q_4^k-q_5^k-q_6^k=0
\]
and there exists $\lambda \in \F^\times$ such that
\[
 q_1q_2q_3=\lambda q_4q_5q_6.
\]
Then
\[
 k<18.
\]
\end{prop}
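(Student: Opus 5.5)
The plan is to reduce to a gcd-free situation and then apply Lemma~\ref{lem:VW-independent} with $m=5$. A shorter relation among the six $k$-th powers is disposed of by Corollary~\ref{cor:VW}. The product relation $q_1q_2q_3=\lambda q_4q_5q_6$ then sharpens the local diversity bounds enough to reach $k<18$.

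\emph{Normalization.} First divide all $q_i$ by $g=\gcd(q_1,\dots,q_6)$. Both hypotheses survive: the sum relation is homogeneous of degree $k$, and the product relation loses $g^3$ on each side. So we may assume $\gcd(q_1,\dots,q_6)=1$. Relabel so that $q_6$ has maximal degree among the $q_i$, and put $S=\deg(q_1q_2q_3)=\deg(q_4q_5q_6)$. Then $\sum_{i=1}^6\deg q_i=2S$ and $\deg q_6\ge S/3$.

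\emph{Linear independence.} Suppose $q_1^k,q_2^k,q_3^k,q_4^k,q_5^k$ are linearly dependent over $\F$. A nontrivial relation among them has at most $5$ terms. Choose any $p_{i_0}$ with nonzero coefficient in it; pairwise nonproportionality gives $p_{i_0}/p_j\notin\F$ for all $j\ne i_0$. Corollary~\ref{cor:VW} with $m\le 5$ then gives $k<15<18$. So we may assume these five are linearly independent.

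Set $y_1,y_2,y_3=q_1^k,q_2^k,q_3^k$, $y_4=-q_4^k$, $y_5=-q_5^k$, and $y_0=q_6^k$. Their gcd is $1$, because a common factor would also divide $q_6^k$. Lemma~\ref{lem:VW-independent} gives
\[
\frac{kS}{3}\le k\deg q_6\le -20+\Delta_\infty+\sum_{\alpha\in\F}\Delta_\alpha\le -10+\sum_{\alpha\in\F}\Delta_\alpha .
\]

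\emph{Local analysis (main obstacle).} It remains to show $\Delta_\alpha\le 3\,r_\alpha$, where $r_\alpha$ is the number of indices $i$ with $q_i(\alpha)=0$. Granting this, $\sum_\alpha\Delta_\alpha\le 3\sum_i\nu(q_i)\le 3\cdot 2S=6S$. Then $kS/3\le 6S-10<6S$ forces $k<18$; note $S\ge1$, since otherwise everything is constant.

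Fix a root $\alpha$. The six valuations are $k e_i$ with $e_i=v_\alpha(q_i)$, and $\gcd=1$ forces at least one zero among them. The product relation gives $e_1+e_2+e_3=e_4+e_5+e_6$, so $r_\alpha\ge2$, with at least one vanishing index on each side.

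\begin{itemize}
\item If $r_\alpha=2$, the two vanishing indices lie on opposite sides and carry equal valuations. After dropping the smallest entry $k_0=0$, the sorted sequence is $(0,0,0,K,K)$. Its minimal distinct lift $(0,1,2,K,K+1)$ has excess $4$, so $\Delta_\alpha=10-4=6=3r_\alpha$.
\item If $r_\alpha=3$, the sequence is $(0,0,\ast,\ast,\ast)$, which has excess at least $1$. Hence $\Delta_\alpha\le 9=3r_\alpha$.
\item If $r_\alpha\ge4$, then trivially $\Delta_\alpha\le 10\le 3r_\alpha$.
\end{itemize}

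The delicate point is the $r_\alpha=2$ case. Without the product relation it would only give $\Delta_\alpha=7$, i.e.\ a ratio $3.5$ per incidence, and the argument would yield merely $k<21$. So I would carefully verify the diversity computation for the sequence $(0,0,0,K,K)$ from definition~\eqref{eq:Delta}.
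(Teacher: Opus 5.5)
Your proof is correct and follows the paper's argument. You reduce to $\gcd=1$, use Corollary~\ref{cor:VW} to make the five $k$-th powers other than the top-degree one linearly independent, apply Lemma~\ref{lem:VW-independent} with $m=5$, bound $\Delta_\infty\le 10$ trivially, and use the product relation to get a local bound of $3$ per zero at finite places. The differences are only bookkeeping: the paper bounds $\Delta_\alpha\le 3\sum_i v_\alpha(q_i)$ with cases $0,2,\ge 4$ and normalizes by $T=\max\deg q_i$, whereas you count vanishing indices $r_\alpha$, which costs you the extra easy case $r_\alpha=3$, and you normalize by $S$.

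One small correction to your closing aside, which does not affect the proof. Without the product relation, a root of a single $q_i$ gives $\Delta_\alpha=\Delta(0,0,0,0,K)=4$, so the method would give only $k<24$, not $k<21$.
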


\begin{proof}
Assume otherwise that $k\geq 18$. Dividing all \(q_i\) by their common gcd, we may assume without loss of
generality that
\[
        \gcd(q_1,\ldots,q_6)=1.
\]
Note that the product relation is preserved after this division.

Let
\[
        T=\max_{1\leq i\leq 6}\deg q_i.
\]
Since the \(q_i\)'s are pairwise nonproportional, not all of them are constant;
hence \(T\ge1\). Choose an index \(i_0\) such that \(\deg q_{i_0}=T\). Write
\[
        \varepsilon_i=
        \begin{cases}
        1,&1\leq i\leq 3,\\
        -1,&4\leq i\leq 6.
        \end{cases}
\]
Then
\[
        \sum_{i=1}^6 \varepsilon_i q_i^k=0.
\]

If there were a nontrivial linear relation among a proper subcollection of the
\(q_i^k\)'s, choose one with the smallest possible number \(s\) of terms.  The cases
\(s=1\) and \(s=2\) are impossible, since \(s=2\) would force two of the \(q_i\)'s to be
proportional over \(\F^\times\).  Thus \(3\le s\le 5\), and Corollary~\ref{cor:VW}
gives
\[
        k<s(s-2)\le 15,
\]
contradicting \(k\ge 18\). Hence the six-term relation is
minimal. In particular, the five polynomials
\[
        q_i^k\qquad (i\neq i_0)
\]
are linearly independent over \(\F\).

Now set
\[
        y_0=-\varepsilon_{i_0}q_{i_0}^k,
        \qquad
        y_i=\varepsilon_i q_i^k\quad (i\neq i_0),
\]
renumbering the five \(y_i\)'s as \(y_1,\ldots,y_5\). Then
\[
        y_1+\cdots+y_5=y_0
\]
and
\[
        \deg y_0=kT.
\]
Moreover,
\[
        \gcd(y_1,\ldots,y_5)=1.
\]
Indeed, any common divisor of \(y_1,\ldots,y_5\) also divides their sum \(y_0\),
and hence divides all six polynomials \(q_i^k\). Since
\[
        \gcd(q_1,\ldots,q_6)=1,
\]
this common divisor must be constant. By  Lemma~\ref{lem:VW-independent}, 
\begin{equation}\label{eq:ub}
 kT
 \leq
 -20+\sum_{\alpha\in \F \cup \{\infty\}}
 \Delta_\alpha(y_1,\ldots,y_5).
\end{equation}

We first estimate the finite-place contribution. For \(\alpha\in\F\), put
\[
 s_\alpha=\sum_{i=1}^6 v_\alpha(q_i).
\]
Since \(\gcd(q_1,\ldots,q_6)=1\), at least one of the six valuations is zero. The product relation gives
\[
 v_\alpha(q_1)+v_\alpha(q_2)+v_\alpha(q_3)
 =
 v_\alpha(q_4)+v_\alpha(q_5)+v_\alpha(q_6),
\]
so \(s_\alpha\) is even. Next we prove the following claim.

\begin{claim}\label{claim:salpha}
We have 
\[
\Delta_\alpha(y_1,\ldots,y_5)\leq 3s_\alpha.
\]
\end{claim}
\begin{poc}
We consider the following three cases.

\textbf{Case 1: \(s_\alpha=0\).} Then all six finite valuations are zero, so
\[
 \Delta_\alpha(y_1,\ldots,y_5)=0.
\]

\textbf{Case 2: \(s_\alpha=2\).} Then, after reordering, the valuation pattern of the \(q_i\)'s is
\[
        0,0,0,0,1,1 .
\]
Thus the six valuations of \(y_0,y_1,\ldots,y_5\) at \(\alpha\) are
\[
        0,0,0,0,k,k .
\]
After discarding the smallest valuation in the definition of
\(\Delta_\alpha(y_1,\ldots,y_5)\), we obtain
\[
        \Delta_\alpha(y_1,\ldots,y_5)=\Delta(0,0,0,k,k).
\]
By definition, this amounts to choosing five pairwise distinct integers with lower bounds
\(0,0,0,k,k\) so as to minimize the total increase. Since \(k\ge 18\), the minimum is attained by
\[
        0,1,2,k,k+1,
\]
whose total increase is \(0+1+2+0+1=4\). Hence
\[
        \Delta_\alpha(y_1,\ldots,y_5)
        =\binom52-4
        =6
        =3s_\alpha .
\]

\textbf{Case 3: \(s_\alpha\geq 4\).} Then the trivial bound gives
\[
 \Delta_\alpha(y_1,\ldots,y_5)\leq \binom{5}{2}=10\leq 3s_\alpha. \qedhere
\]
\end{poc}

By Claim~\ref{claim:salpha}, we have
\begin{equation}\label{eq:finite}
\sum_{\alpha\in\F}\Delta_\alpha(y_1,\ldots,y_5)
 \leq
 3\sum_{\alpha\in\F}s_\alpha
 =
 3\sum_{i=1}^6\deg q_i
 \leq
 18T.    
\end{equation}

Finally, at the infinite place, we use the trivial bound
\begin{equation}\label{eq:infinite}
 \Delta_\infty(y_1,\ldots,y_5)\leq \binom{5}{2}=10.
\end{equation}
Comparing inequalities~\eqref{eq:ub},~\eqref{eq:finite}, and~\eqref{eq:infinite}, we obtain that 
\[
 kT
 \leq
 -20+18T+10
 =
 18T-10,
\]
contradicting the assumption that \(k\geq 18\). 
\end{proof}

\begin{rem}
The only place where the threshold \(k\geq18\) enters is Proposition~\ref{prop:six-singleton}.
It is natural to ask whether the six-term relation
\[
        q_1^k+q_2^k+q_3^k=q_4^k+q_5^k+q_6^k,\qquad
        q_1q_2q_3=\lambda q_4q_5q_6,
\]
satisfies the stronger bound \(k<15\). Such a result would lower the
threshold on $k$ in our main theorem from \(18\) to \(15\). 
\end{rem}

\subsection{Proof of Proposition~\ref{prop:18}}
Let $S_3$ be the group of all permutations of $\{1,2,3\}$. Let $A_3$ be the alternating subgroup of \(S_3\). Put
\[
 \theta=(12).
\]

\begin{proof}[Proof of Proposition~\ref{prop:18}]

Suppose otherwise that there exist pairwise
distinct $a_1,a_2,a_3,b_1,b_2,b_3\in A$
such that
\[
 a_i b_j+n\neq0\qquad(1\le i,j\le3)
\]
and
\[
 \frac{P_{\mathrm{id}}}{P_\theta}=\frac{(a_1b_1+n)(a_2b_2+n)}
      {(a_1b_2+n)(a_2b_1+n)}\notin\F^\times.
\]
Here, for each $\sigma \in S_3$, write
\[
P_\sigma=\prod_{i=1}^3(a_i b_{\sigma(i)}+n).
\]
We will derive a contradiction by establishing a few claims.

First we deduce some basic relations among $P_{\sigma}$'s.

\begin{claim}\label{lem:two-term-opposite-parity}
\begin{enumerate}
    \item We have
    \[
 \prod_{\sigma\in A_3}P_\sigma
 =
 \prod_{\sigma\notin A_3}P_\sigma,
\]
\item If \(\sigma,\tau\in S_3\) have opposite signs, then
\[
P_\sigma\neq P_\tau.
\]
\end{enumerate}

\end{claim}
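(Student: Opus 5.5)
Both parts come from writing out the products explicitly; part (2) is the same computation as in the proof of Theorem~\ref{thm:integer}, with nonvanishing now given directly by the hypothesis $a_ib_j+n\neq 0$ for all $i,j$.

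For part (1), note that each factor $a_ib_j+n$ with $1\le i,j\le 3$ appears in exactly two of the six products $P_\sigma$. Indeed, $\sigma(i)=j$ fixes one value of $\sigma$, and exactly two permutations in $S_3$ satisfy it. These two permutations differ by the transposition of the two remaining values, so they have opposite signs. Hence each factor $a_ib_j+n$ occurs exactly once in $\prod_{\sigma\in A_3}P_\sigma$ and exactly once in $\prod_{\sigma\notin A_3}P_\sigma$. Both products are therefore equal to $\prod_{i,j}(a_ib_j+n)$, which proves the identity.

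For part (2), let $\sigma,\tau$ have opposite signs. Then $\sigma^{-1}\tau$ is odd in $S_3$, hence a transposition. So there are distinct indices $r,s$, with remaining index $t$, such that $\tau(r)=\sigma(s)$, $\tau(s)=\sigma(r)$ and $\tau(t)=\sigma(t)$. Put $u=\sigma(r)$ and $v=\sigma(s)$. Factoring out the common term and expanding gives
\[
P_\sigma-P_\tau=(a_tb_{\sigma(t)}+n)\bigl((a_rb_u+n)(a_sb_v+n)-(a_rb_v+n)(a_sb_u+n)\bigr)=n(a_r-a_s)(b_u-b_v)(a_tb_{\sigma(t)}+n).
\]
Each factor on the right is nonzero: $n\neq0$ by assumption, $a_r\neq a_s$ and $b_u\neq b_v$ because the six elements are pairwise distinct, and $a_tb_{\sigma(t)}+n\neq0$ by hypothesis. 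Since $\F[x]$ is an integral domain, $P_\sigma\neq P_\tau$.

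There is no real obstacle here. The only point needing care is identifying $\sigma^{-1}\tau$ as a transposition and keeping the indices straight in the factorization.
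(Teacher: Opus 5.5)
Your proof is correct and follows the paper's argument. In part (1) both products equal $\prod_{i,j}(a_ib_j+n)$; you also justify why each factor appears once on each side, which the paper leaves implicit. Part (2) is the same transposition computation $P_\sigma-P_\tau=n(a_r-a_s)(b_u-b_v)(a_tb_{\sigma(t)}+n)$ used in the paper.
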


\begin{poc}
(1) The identity follows since both sides are equal to
\[
 \prod_{i=1}^3\prod_{j=1}^3(a_i b_j+n).
\]

(2) Since \(\sigma\) and \(\tau\) have opposite signs, \(\sigma^{-1}\tau\) is an
odd permutation of \(S_3\), and hence is a transposition. Thus there exist
distinct \(r,s\in\{1,2,3\}\), with remaining index \(t\), such that
\[
\tau(r)=\sigma(s),\qquad \tau(s)=\sigma(r),\qquad \tau(t)=\sigma(t).
\]
Put
\[
u=\sigma(r),\qquad v=\sigma(s).
\]
Then
\[
\begin{aligned}
P_\sigma-P_\tau
&=
(a_t b_{\sigma(t)}+n)
\Bigl((a_r b_u+n)(a_s b_v+n)
      -(a_r b_v+n)(a_s b_u+n)\Bigr)  \\
&=
n(a_r-a_s)(b_u-b_v)(a_t b_{\sigma(t)}+n).
\end{aligned}
\]
This is nonzero because \(n\ne0\), the six chosen polynomials are pairwise
distinct, and \(a_t b_{\sigma(t)}+n\ne0\). 
\end{poc}

Partition \(S_3\) into equivalence classes by declaring
\[
    \sigma\sim\tau
    \quad\Longleftrightarrow\quad
    P_\sigma/P_\tau\in\F^\times .
\]
For a subset
\(I\subset S_3\), write
\[
\Delta_I(P)=\sum_{\sigma\in I}\operatorname{sgn}(\sigma)P_\sigma.
\]
We show in the following claim that $\Delta_U(P)=0$ for each equivalence class $U \subseteq S_3$.

\begin{claim}\label{prop:main-obstruction}
Every equivalence class \(U\subseteq S_3\) satisfies
\[
   \Delta_U(P)=0.
\]
\end{claim}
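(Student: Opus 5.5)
The plan is to turn the determinant identity into a linear relation among $k$-th powers of pairwise nonproportional polynomials, one for each equivalence class, and then use the earlier Mason--Stothers type results to force every coefficient of that relation to vanish.

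\textbf{The relation.} For $1\le i,j\le 3$ write $a_ib_j+n=x_{ij}^k$ with $x_{ij}\in\F[x]$. Each $x_{ij}$ is nonzero by hypothesis. Set $X_\sigma=\prod_i x_{i,\sigma(i)}$, so $P_\sigma=X_\sigma^k$ and every $X_\sigma\neq 0$. As in the proof of Theorem~\ref{thm:integer}, the matrix $(a_ib_j+n)_{1\le i,j\le 3}$ has rank at most $2$. Its determinant therefore vanishes, which gives
\[
\sum_{\sigma\in S_3}\operatorname{sgn}(\sigma)P_\sigma=0 .
\]

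\textbf{Grouping by classes.} For each class $U$ fix a representative $\sigma_U\in U$. For $\sigma\in U$ we have $P_\sigma=c_\sigma P_{\sigma_U}$ with $c_\sigma\in\F^\times$. Hence $\Delta_U(P)=C_U\,X_{\sigma_U}^k$, where $C_U=\sum_{\sigma\in U}\operatorname{sgn}(\sigma)c_\sigma$. The determinant identity becomes
\[
\sum_U C_U\, p_U^k=0,\qquad p_U:=X_{\sigma_U}.
\]
For distinct classes the $p_U$ are pairwise nonproportional. Indeed, if $p_U/p_{U'}\in\F$, then $P_{\sigma_U}/P_{\sigma_{U'}}\in\F^\times$, so the two classes coincide. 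It suffices to show $C_U=0$ for every $U$.

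\textbf{Counting the surviving classes.} Suppose some $C_U\neq 0$. Restrict the relation to the $m'$ classes with $C_U\neq0$.
\begin{itemize}
\item $m'=1$ is impossible, since $C_Up_U^k=0$ would force $p_U=0$.
\item $m'=2$ is impossible, since then $(p_U/p_{U'})^k\in\F$, so $p_U/p_{U'}\in\F$ because $\F$ is algebraically closed, contradicting nonproportionality.
\item For $3\le m'\le 5$, Corollary~\ref{cor:VW} applies with any $i_0$ and gives $k<m'(m'-2)\le 15$, contradicting $k\ge18$.
\end{itemize}

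\textbf{The main obstacle: $m'=6$.} This is the one case where Corollary~\ref{cor:VW} is too weak, since it only yields $k<24$. Since there are at most $6$ classes, $m'=6$ forces every class to be a singleton. Then all $c_\sigma=1$ and the relation is exactly
\[
\sum_{\sigma\in A_3}X_\sigma^k-\sum_{\sigma\notin A_3}X_\sigma^k=0,
\]
with the six $X_\sigma$ pairwise nonproportional. By part~(1) of Claim~\ref{lem:two-term-opposite-parity},
\[
\Bigl(\prod_{\sigma\in A_3}X_\sigma\Bigr)^k=\Bigl(\prod_{\sigma\notin A_3}X_\sigma\Bigr)^k,
\]
so $\prod_{A_3}X_\sigma=\lambda\prod_{\sigma\notin A_3}X_\sigma$ for some $k$-th root of unity $\lambda\in\F^\times$. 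Labelling the even $X_\sigma$ as $q_1,q_2,q_3$ and the odd ones as $q_4,q_5,q_6$ puts us exactly in the hypotheses of Proposition~\ref{prop:six-singleton}. That proposition gives $k<18$, a contradiction.

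Hence $C_U=0$, that is $\Delta_U(P)=0$, for every class $U$.
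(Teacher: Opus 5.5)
Your proof is correct and follows essentially the same route as the paper. You expand the vanishing determinant, collapse the relation by equivalence classes, rule out $1\le t\le 5$ surviving classes via nonproportionality and Corollary~\ref{cor:VW}, and handle the all-singleton case with the product identity and Proposition~\ref{prop:six-singleton}. The only cosmetic difference is that you normalize $P_\sigma=c_\sigma P_{\sigma_U}$ directly, so your $c_\sigma$ is the paper's $c_\sigma^k$, and you write the determinant of $(a_ib_j+n)$ rather than of $(f_{ij}^k)$, which is the same matrix.
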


\begin{poc}
Since \(A\) is a Diophantine tuple with property \(D_k(n)\), for each \(1\le i,j\le 3\), we can write
\[
    a_i b_j+n=f_{ij}^k
\]
for some \(f_{ij}\in\F[x]\); since \(a_i b_j+n\neq 0\), each \(f_{ij}\) is nonzero.

Let
\[
    M=(f_{ij}^k)_{1\le i,j\le 3}.
\]
Then the rank of $M$ is at most $2$ and hence \(\det M=0\). Expanding the determinant of $M$ gives
\[
    \sum_{\sigma\in S_3}\sgn(\sigma)
    \prod_{i=1}^3 f_{i,\sigma(i)}^k=0.
\]
For each $\sigma \in S_3$, set
\[
    Q_\sigma=\prod_{i=1}^3 f_{i,\sigma(i)}.
\]
Then \(Q_\sigma\neq 0\), \(Q_\sigma^k=P_\sigma\), and
\begin{equation}\label{eq:det-power-relation}
    \sum_{\sigma\in S_3}\sgn(\sigma)Q_\sigma^k=0.
\end{equation}

The equivalence classes of the \(Q_\sigma\)'s are the same as those of the
\(P_\sigma\)'s. Indeed, if \(Q_\sigma/Q_\tau\in\F^\times\), then clearly
\(P_\sigma/P_\tau\in\F^\times\). Conversely, if
\(P_\sigma/P_\tau\in\F^\times\), then
\[
    (Q_\sigma/Q_\tau)^k\in\F^\times.
\]
Since \(Q_\sigma/Q_\tau\in\F(x)\), this implies
\(Q_\sigma/Q_\tau\in\F^\times\).

Suppose, for contradiction, that some equivalence class has nonzero signed sum. For an equivalence class \(U\), choose a representative \(R_U\).  Then each
\(Q_\sigma\) with \(\sigma\in U\) has the form
\[
        Q_\sigma=c_\sigma R_U,\qquad c_\sigma\in\mathbb F^\times.
\]
Thus the collapsed coefficient of \(R_U^k\) is
\[
        \sum_{\sigma\in U}\operatorname{sgn}(\sigma)c_\sigma^k,
\]
which is nonzero exactly when \(\Delta_U(P)\ne0\). Collapse the determinant relation~\eqref{eq:det-power-relation} by equivalence classes, and discard the
classes with zero collapsed coefficient. This gives a nontrivial relation among \(t\)
pairwise nonproportional \(k\)-th powers, where \(t\) is the number of equivalence
classes with nonzero collapsed coefficient. 

The cases \(t=1\) and \(t=2\) are impossible: a nontrivial one-term relation
cannot occur, and a two-term relation would imply that two representatives are
proportional over \(\F^\times\), contradicting the definition of distinct
equivalence classes. Thus, if \(t\leq 5\), then \(3\leq t\leq 5\), and
Corollary~\ref{cor:VW} gives
\[
        k<t(t-2)\leq 15,
\]
contradicting \(k\geq 18\). 

Thus \(t=6\). Hence all six equivalence classes are singletons, so the \(Q_\sigma\)'s are pairwise
nonproportional. By Claim~\ref{lem:two-term-opposite-parity}(1),
\[
        \left(
        \frac{\prod_{\sigma\in A_3}Q_\sigma}
             {\prod_{\sigma\notin A_3}Q_\sigma}
        \right)^k
        =
        \frac{\prod_{\sigma\in A_3}P_\sigma}
             {\prod_{\sigma\notin A_3}P_\sigma}
        =
        1.
\]
It follows that 
\[
        \prod_{\sigma\in A_3}Q_\sigma
        =
        \lambda
        \prod_{\sigma\notin A_3}Q_\sigma
\]
for some \(\lambda\in\F^\times\). Since we have equation~\eqref{eq:det-power-relation}, Proposition~\ref{prop:six-singleton} implies that $k<18$, again contradicting the assumption \(k\geq 18\). 
\end{poc}

We now obtain a contradiction by comparing Claim~\ref{prop:main-obstruction} with the following claim.

\begin{claim}\label{lem:global-class}
If
\[
 P_\theta/P_{\mathrm{id}}\notin \F^\times,
\]
then there is an equivalence class \(U\subset S_3\) such that
\[
 \Delta_U(P)\neq 0.
\]
\end{claim}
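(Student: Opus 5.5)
The plan is to argue by contradiction: assume $P_\theta/P_{\mathrm{id}}\notin\F^\times$ and yet $\Delta_U(P)=0$ for every equivalence class $U$. Then $\mathrm{id}$ and $\theta$ lie in different classes, and I will run a case analysis on the shape of the partition of $S_3$ into classes. Write $U_{\mathrm{id}}$ and $U_\theta$ for the two classes containing $\mathrm{id}$ and $\theta$.

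\textbf{Step 1: size constraints.} Every class has size at least $2$. A singleton $\{\sigma\}$ has $\Delta_{\{\sigma\}}(P)=\pm P_\sigma\neq0$, since all $a_ib_j+n\neq0$.

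Next take a class $\{\sigma,\tau\}$ of size exactly $2$. If $\sigma,\tau$ had opposite signs, then $\Delta=0$ would give $P_\sigma=P_\tau$, contradicting Claim~\ref{lem:two-term-opposite-parity}(2). So $\sigma,\tau$ have the same sign and $P_\sigma=-P_\tau$.

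Since $S_3$ has three even and three odd permutations, a partition into three pairs would have to pair the three even permutations among themselves, which is impossible. Hence the partition is of type $3+3$ or $2+4$; the type $6$ is excluded because $\mathrm{id}\not\sim\theta$.

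\textbf{Step 2: the type $3+3$.} Here $U_{\mathrm{id}}$ and $U_\theta$ are the two classes. Each has zero signed sum, so each contains both even and odd permutations; for a $3$-class with all signs equal one could a priori have cancellation, and I will treat that sub-case separately.

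In the mixed sub-case, a class contains some $\sigma,\tau$ of opposite signs with $P_\sigma=cP_\tau$. Claim~\ref{lem:two-term-opposite-parity}(2) gives $c\neq 1$, and by its proof
\[
 (1-c)P_\tau = P_\tau - P_\sigma = \pm\, n(a_r-a_s)(b_u-b_v)(a_tb_{\sigma(t)}+n).
\]
So $P_\tau$ itself is a constant multiple of $n(a_r-a_s)(b_u-b_v)(a_tb_{\sigma(t)}+n)$. Dividing by the common factor $a_tb_{\sigma(t)}+n$ gives a $2\times2$ relation
\[
 (a_rb_u+n)(a_sb_v+n)=c'\,n(a_r-a_s)(b_u-b_v).
\]
I will combine such relations, coming from both classes, with the product identity of Claim~\ref{lem:two-term-opposite-parity}(1) and with the single determinant relation $\sum_\sigma\sgn(\sigma)P_\sigma=0$. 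The goal is to force $P_\theta/P_{\mathrm{id}}$ to be a constant. The mechanism is that every element of a class is then proportional to an explicit product of differences, and these products for $\mathrm{id}$ and $\theta$ share enough factors to make their ratio constant.

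In the all-one-sign sub-case, $U_{\mathrm{id}}=A_3$ and $U_\theta$ is the set of odd permutations. Then $\{1,2,3\}$-cyclic shifts give $P_{(123)}=c_1P_{\mathrm{id}}$ and $P_{(132)}=c_2P_{\mathrm{id}}$. Comparing the divisors of $a_ib_j+n$, via $v_\alpha$-valuations, should force strong coincidences among the nine entries, which I then feed into the same $2\times2$ relations.

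\textbf{Step 3: the type $2+4$.} The $2$-class is a same-sign pair with $P_\sigma=-P_\tau$. The $4$-class then contains the third permutation of that sign and all three of the opposite sign. In particular it contains two permutations of opposite signs, and Step 2's trick applies inside it. Which of $\mathrm{id}$ and $\theta$ lies in the $2$-class only changes the bookkeeping.

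\textbf{Main obstacle.} I expect the main obstacle to be converting these proportionalities into the conclusion that $P_\theta/P_{\mathrm{id}}$ is constant. The natural tool is Lemma~\ref{lem:valuation-ratio}, applied at a root $\alpha$ where one factor has minimal valuation, in order to exclude the proportionalities one at a time. I would try first to show directly that $\mathrm{id}\sim(123)\sim(132)$ leads to a contradiction with $\mathrm{id}\not\sim\theta$ via these valuations, since the remaining configurations reduce to it.
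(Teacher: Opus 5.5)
Your Step 1 is correct and matches the paper's reduction. There are no singleton classes, every $2$-class is a same-sign pair, and the type $2+2+2$ is excluded by parity. So there are exactly two classes $V,W$, of type $3+3$ or $2+4$.

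\textbf{The gap is in Steps 2 and 3: the contradiction is never derived.} The $2\times2$ relation you extract from an opposite-sign pair is a valid consequence. However, the mechanism that combining such relations forces $P_\theta/P_{\mathrm{id}}$ to be constant, because the relevant products ``share enough factors,'' is not an argument. Nor is the claim that valuations ``should force strong coincidences among the nine entries'' in the all-one-sign sub-case. It is not clear either route can be completed, since $\mathrm{id}$ and $\theta$ sit in different classes and their associated products of differences need not cancel. As written, the proposal establishes only the reduction to two classes.

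\textbf{The missing idea is an exponent count in the product identity of Claim~\ref{lem:two-term-opposite-parity}(1).}
\begin{itemize}
\item Choose representatives $R$ of $V$ and $S$ of $W$, so that $P_\sigma\in\F^\times R$ for $\sigma\in V$ and $P_\sigma\in\F^\times S$ for $\sigma\in W$.
\item Since $|A_3|=|S_3\setminus A_3|=3$, the identity $\prod_{\sigma\in A_3}P_\sigma=\prod_{\sigma\notin A_3}P_\sigma$ becomes
\[
(R/S)^\delta\in\F^\times,\qquad \delta=|V\cap A_3|-|V\setminus A_3|.
\]
\item In type $3+3$, $\delta=2|V\cap A_3|-3$ is odd. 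In type $2+4$, the $2$-class is a same-sign pair, so $\delta=\pm2$. Either way $\delta\neq0$.
\item A rational function with a nonzero constant power is constant. Hence $R/S\in\F^\times$, so the two classes coincide, contradicting $\mathrm{id}\not\sim\theta$.
\end{itemize}
Your all-one-sign sub-case $V=A_3$ is just $R^3\in\F^\times S^3$. No valuation analysis, no Lemma~\ref{lem:valuation-ratio}, and no determinant relation is needed.
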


\begin{poc}
Suppose otherwise that \(\Delta_U(P)=0\) for every equivalence class
\(U\). Then no class is a singleton. Also, if $U$ is an equivalence class of size $2$, then since \(\Delta_U(P)=0\), by Claim~\ref{lem:two-term-opposite-parity}(2), $U$ has to consist of two permutations of the same sign.

Since \(\theta\not\sim \mathrm{id}\), there are at least two distinct equivalence classes. Thus the size of the partition of $S_3$ into equivalence classes is one of
\[
 3+3,\qquad 2+4,\qquad 2+2+2.
\]
The case \(2+2+2\) is impossible, because three pairs cannot partition three
even and three odd permutations without producing an opposite-parity pair.

Therefore the partition has two classes, say \(V\) and \(W\) with $|V|\leq |W|$. Choose
representatives \(R\) and \(S\) for the two classes. Let
\[
 \delta=|V\cap A_3|-|V\setminus A_3|.
\]
Writing every element of \(V\) as a scalar multiple of \(R\), and every element
of \(W\) as a scalar multiple of \(S\), the product identity in Claim~\ref{lem:two-term-opposite-parity}(1)
gives, up to a scalar in \(\mathbb F^\times\),
\begin{equation}\label{eq:ratio}
\frac{R^{|V \cap A_3|} S^{|W \cap A_3|}}{R^{|V \setminus A_3|}S^{|W \setminus A_3|}}=\bigg(\frac{R}{S}\bigg)^\delta\in \F^\times.
\end{equation}
If $|V|=|W|=3$, then
\(\delta=2|V\cap A_3|-3\neq0\); if $|V|=2$ and $|W|=4$, by the preceding paragraph, the two elements of $V$ have the
same sign, so \(\delta=\pm2\). Hence \(\delta\neq0\) in both cases. Since
\(R/S\in\F(x)\), it follows from equation~\eqref{eq:ratio} that \(R/S\in\F^\times\). Thus, \(V\) and \(W\) were
the same equivalence class after all, a contradiction.
\end{poc}

Since \(P_\theta/P_{\mathrm{id}}\notin \F^\times\) by assumption, Claim~\ref{lem:global-class} gives
an equivalence class \(U\) with
\[
        \Delta_U(P)\neq 0.
\]
This contradicts Claim~\ref{prop:main-obstruction}. Therefore the assumed configuration cannot exist,
and Proposition~\ref{prop:18} follows.
\end{proof}

\section{Proof of Theorem~\ref{thm:main}: the nonsquare case}\label{sec:thm1}

In this section, we work on the nonsquare case.

\begin{lem}\label{lem:same-side}
Let \(n\in\F[x]\setminus\{0\}\) be nonsquare. Choose \(\alpha\in\F\) such
that \(m=v_\alpha(n)\) is odd. Let \(T\subset\F[x]\) be a set of size at least $4$ such that either
\[
 v_\alpha(t)\ge (m+1)/2
\]
for all $t\in T$, or
\[
 v_\alpha(t)\le (m-1)/2
\]
for all $t\in T$. If \(x_1,x_2,y_1,y_2\in T\) are pairwise distinct, then
\[
 x_i y_j+n\neq0\qquad(1\le i,j\le2),
\]
and
\[
 \frac{(x_1y_1+n)(x_2y_2+n)}
      {(x_1y_2+n)(x_2y_1+n)}
 \notin\F^\times.
\]
\end{lem}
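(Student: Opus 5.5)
The plan is to compute $v_\alpha$ of each shifted product $x_iy_j+n$ and then feed the outcome into Lemma~\ref{lem:valuation-ratio}. Write $m=v_\alpha(n)$, which is odd, and split according to the two cases in the hypothesis on $T$.

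\emph{Case (a): all $v_\alpha(t)\ge (m+1)/2$.} Then for distinct $x,y\in T$ we have $v_\alpha(xy)\ge m+1>m$. Hence $xy+n\neq 0$, and in fact $v_\alpha(xy+n)=m$. To get the ratio statement, set $F_0=n$, $E_1=x_1y_1+x_2y_2\cdot(\ldots)$; more concretely, expand
\[
(x_1y_1+n)(x_2y_2+n)=n^2+n(x_1y_1+x_2y_2)+x_1x_2y_1y_2 .
\]
Put $F_0=n^2$, $E_1=n(x_1y_1+x_2y_2)+x_1x_2y_1y_2$, and $E_2=n(x_1y_2+x_2y_1)+x_1x_2y_1y_2$. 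Both $E_1$ and $E_2$ have valuation at least $2m+1>2m=v_\alpha(F_0)$, since $v_\alpha(nxy)\ge 2m+1$ and $v_\alpha(x_1x_2y_1y_2)\ge 2m+2$. Moreover
\[
E_1-E_2=n(x_1-x_2)(y_1-y_2)\neq 0
\]
by distinctness. Lemma~\ref{lem:valuation-ratio} then shows that the ratio is not in $\F^\times$.

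\emph{Case (b): all $v_\alpha(t)\le (m-1)/2$.} Now $v_\alpha(xy)\le m-1<m$. So $v_\alpha(xy+n)=v_\alpha(xy)$, and in particular $xy+n\neq 0$. Here the dominant term is the product $x_1x_2y_1y_2$, which is common to both the numerator and denominator expansions. Take $F_0=x_1x_2y_1y_2$, with valuation $V=\sum v_\alpha$ of the four elements. Then $E_1=n(x_1y_1+x_2y_2)+n^2$, and $E_2$ is the analogous expression. Each term of $E_1$ and $E_2$ has strictly larger valuation than $V$:
\begin{itemize}
\item $v_\alpha(nx_1y_1)=m+v_\alpha(x_1)+v_\alpha(y_1)$, and this exceeds $V$ because $v_\alpha(x_2)+v_\alpha(y_2)\le m-1<m$;
\item $v_\alpha(n^2)=2m>V$, because $V\le 2(m-1)$.
\end{itemize}
As in case (a), $E_1-E_2=n(x_1-x_2)(y_1-y_2)\neq0$, so Lemma~\ref{lem:valuation-ratio} applies again.

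The only delicate point is recognizing that case (b) must use $F_0=x_1x_2y_1y_2$ rather than $n^2$. Once that choice is made, the odd parity of $m$ is exactly what makes the strict inequalities hold in both cases, and the rest is routine.
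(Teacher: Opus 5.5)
Your proof is correct and is essentially the paper's argument. You write both products as $F_0+E_i$, taking $F_0=n^2$ in the high-valuation case and $F_0=x_1x_2y_1y_2$ in the low-valuation case (nonzero there, since all valuations are finite), use the identity $E_1-E_2=n(x_1-x_2)(y_1-y_2)\neq 0$, and apply Lemma~\ref{lem:valuation-ratio}; your explicit valuation estimates fill in what the paper leaves implicit.

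One minor point on your closing remark: under the lemma's stated hypotheses, the strict inequalities do not use the oddness of $m$. Oddness matters only because it makes the two cases exhaustive when the lemma is applied in Theorem~\ref{thm1}.
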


\begin{proof}
The nonvanishing of $x_iy_j+n$ is immediate from the defining inequalities for \(T\): in
the first case \(v_\alpha(x_i y_j)>v_\alpha(n)\), and in the second case
\(v_\alpha(x_i y_j)<v_\alpha(n)\).

Put
\[
 N=(x_1y_1+n)(x_2y_2+n),\qquad
 M=(x_1y_2+n)(x_2y_1+n).
\]
Then
\[
 N-M=n(x_1-x_2)(y_1-y_2)\neq0.
\]

In the high-valuation case (that is, $v_{\alpha}(t)\geq (m+1)/2$ for all $t\in T$), we can write
\[
 N=n^2+E_N,\qquad M=n^2+E_M,
\]
with $v_\alpha(E_N),v_\alpha(E_M)>2m$;
in the low-valuation case (that is, $v_{\alpha}(t)\leq (m-1)/2$ for all $t\in T$), with \(H=x_1x_2y_1y_2\), we have
\[
 N=H+E_N,\qquad M=H+E_M,
\]
with $v_\alpha(E_N),v_\alpha(E_M)>v_\alpha(H).$
In both cases we have \(E_N\neq E_M\) since \(N\neq M\). Lemma~\ref{lem:valuation-ratio}
therefore gives \(N/M\notin\F^\times\), as required.
\end{proof}

Now we are ready to present the proof of Theorem~\ref{thm:main} for nonsquare $n$.

\begin{thm}\label{thm1}
Let $n\in \F[x]\setminus\{0\}$ be nonsquare, and let $A\subset \F[x]$ be a Diophantine tuple with property $D_k(n)$. 
\begin{enumerate}
    \item If $k\geq 18$, then
\[
|A|\leq 6;
\]
\item If $k\geq 18$ and $k$ is even, then we have the stronger bound
\[
|A|\leq 5.
\]
\end{enumerate} 
\end{thm}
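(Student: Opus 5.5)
We argue by contradiction in both parts: from a tuple that is too large we extract a configuration forbidden by Proposition~\ref{prop:18}. Since $n$ is nonsquare, fix $\alpha\in\F$ with $m=v_\alpha(n)$ odd. Partition $A=A_{\mathrm{hi}}\cup A_{\mathrm{lo}}$, where
\[
A_{\mathrm{hi}}=\{t\in A: v_\alpha(t)\ge (m+1)/2\},\qquad A_{\mathrm{lo}}=\{t\in A: v_\alpha(t)\le (m-1)/2\}.
\]
Because $m$ is odd, every $t\in A$ lies in exactly one of these sets. Both are sets $T$ to which Lemma~\ref{lem:same-side} applies.

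\textbf{Part (1).} Suppose $|A|\ge 7$. By pigeonhole, one of $A_{\mathrm{hi}}$ and $A_{\mathrm{lo}}$ has at least $4$ elements. Choose pairwise distinct $a_1,a_2,b_1,b_2$ in that part. Lemma~\ref{lem:same-side} then gives $a_ib_j+n\neq0$ for $1\le i,j\le 2$, and it shows that the cross-ratio $(a_1b_1+n)(a_2b_2+n)/\bigl((a_1b_2+n)(a_2b_1+n)\bigr)$ is nonconstant. Since $|A|\ge7$, Lemma~\ref{lem:extension-no-C} supplies $a_3,b_3\in A$ such that all six elements are distinct and all nine shifted products are nonzero. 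This is exactly the configuration excluded by Proposition~\ref{prop:18}, a contradiction.

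\textbf{Part (2).} Now let $k$ be even, so each nonzero $ab+n$ with $a\ne b$ is a square, and its $v_\alpha$-valuation is even. We first show $|A_{\mathrm{hi}}|\le1$. If $a\ne b$ both lie in $A_{\mathrm{hi}}$, then $v_\alpha(ab)\ge m+1>m$. Hence $ab+n\neq0$ and $v_\alpha(ab+n)=m$, which is odd. This is impossible.

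Suppose $|A|\ge6$. Then $|A_{\mathrm{lo}}|\ge5$. The extension lemma is unavailable here because it needs $|A|\ge 7$, so we build the six elements by hand.

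\begin{itemize}
\item If $|A_{\mathrm{lo}}|\ge6$, take all six elements from $A_{\mathrm{lo}}$. For any distinct $a,b\in A_{\mathrm{lo}}$ we have $v_\alpha(ab)<m$, so $ab+n\ne0$. Apply Lemma~\ref{lem:same-side} to $a_1,a_2,b_1,b_2$ to get the nonconstant cross-ratio.
\item Otherwise $A_{\mathrm{lo}}=\{\ell_1,\dots,\ell_5\}$ and $A_{\mathrm{hi}}=\{h\}$. Set $b_3=h$. At most one $\ell\in A_{\mathrm{lo}}$ satisfies $\ell h+n=0$, namely $\ell=-n/h$ if that lies in $A_{\mathrm{lo}}$. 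Place this bad element, if it exists, among $b_1,b_2$. The remaining low elements fill $a_1,a_2,a_3$.
\end{itemize}

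In the second case, the products $a_ib_j+n$ with $j\le2$ are low-times-low and hence nonzero. The products $a_ih+n$ are nonzero because no $a_i$ is the bad element. Lemma~\ref{lem:same-side}, applied to $T=A_{\mathrm{lo}}$ with $a_1,a_2,b_1,b_2$, gives the nonconstant cross-ratio. Proposition~\ref{prop:18} again gives a contradiction, so $|A|\le5$.

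The main point needing care is this last step: the bound $|A_{\mathrm{hi}}|\le1$ from parity of valuations, together with placing the unique possible zero product in a harmless position. Everything else follows directly from the earlier results.
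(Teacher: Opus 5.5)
Your proof is correct and follows the paper's argument essentially verbatim: you use the same high/low valuation split at a point where $v_\alpha(n)$ is odd, pigeonhole plus Lemma~\ref{lem:extension-no-C} for part (1), and the parity bound $|A^+|\le 1$ with a hand-built $3\times 3$ configuration for part (2). The only cosmetic difference is that you set $b_3=h$ and put the possible bad element among $b_1,b_2$, whereas the paper sets $a_3=h$ and puts it among $a_1,a_2$; these are equivalent by the $a\leftrightarrow b$ symmetry of the cross-ratio.
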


\begin{proof}
Since $n$ is a nonsquare, we can choose \(\alpha\in\F\) such that \(m=v_\alpha(n)\) is odd. Set 
\[
 A^+=\{a\in A:v_\alpha(a)\geq (m+1)/2\},
 \qquad
 A^-=\{a\in A:v_\alpha(a)\leq (m-1)/2\}.
\]

(1) Suppose \(|A|\ge7\). By pigeonhole, one of \(A^+\) and \(A^-\) contains four elements. Thus, by 
Lemma~\ref{lem:same-side}, we can choose pairwise
distinct \(a_1,a_2,b_1,b_2\in A\) with \(a_i b_j+n\neq0\) for $1\leq i,j\leq 2$ and with 
\[
 \frac{(a_1b_1+n)(a_2b_2+n)}
      {(a_1b_2+n)(a_2b_1+n)} \notin \F^\times.
\]
Lemma~\ref{lem:extension-no-C} gives
\(a_3,b_3\in A\) such that the resulting six elements are pairwise distinct
and all \(a_i b_j+n\) are nonzero for $1\leq i,j\leq 3$. This contradicts Proposition~\ref{prop:18}.

(2) Suppose \(|A|\ge6\). Since \(k\) is even, we have $|A^+|\le1.$ Indeed, two distinct elements \(a,b\in A^+\) would give
\(v_\alpha(ab+n)=m\), contradicting that \(ab+n\) is a square. Thus
\(|A^-|\ge5\).

If \(|A^-|\ge6\), choose six distinct elements from \(A^-\), naming them
\(a_1,a_2,a_3,b_1,b_2,b_3\). By Lemma~\ref{lem:same-side}, all \(a_i b_j+n\) are nonzero for $1\leq i,j\leq 3$ and \[
 \frac{(a_1b_1+n)(a_2b_2+n)}
      {(a_1b_2+n)(a_2b_1+n)}
 \notin\F^\times.
\]
This contradicts
Proposition~\ref{prop:18}.

It remains to consider the case \(|A^-|=5\). Then \(A=A^-\cup\{h\}\) with
\(h\in A^+\). By Lemma~\ref{lem:same-side}, we have $aa'+n\neq 0$ for distinct $a,a'\in A^-$. Note that there is at most one \(\ell\in A^-\) that satisfies \(h\ell+n=0\). Thus, we can choose
three elements \(b_1,b_2,b_3\in A^-\) with \(hb_j+n\neq0\) for $1\leq j \leq 3$, and call the two
remaining elements of \(A^-\) by \(a_1,a_2\). Set \(a_3=h\). Then all \(a_i b_j+n\) are nonzero for $1\leq i,j\leq 3$. Applying Lemma~\ref{lem:same-side} to
\(a_1,a_2,b_1,b_2 \in A^-\) gives
\[
 \frac{(a_1b_1+n)(a_2b_2+n)}
      {(a_1b_2+n)(a_2b_1+n)}
 \notin\F^\times.
\]
Again this contradicts Proposition~\ref{prop:18}. 
\end{proof}

\section{Proof of Theorem~\ref{thm:main}: the square case}\label{sec:thm2}
In this section, we work on the square case \(n=s^2\). 

Recall that in the nonsquare case an odd valuation of $n$ partitions the tuple into high-valuation and low-valuation classes; in the square case all valuations of n are even, so this dichotomy no longer forces a nonconstant cross ratio.

We instead split into two cases: \(A\) is contained in an affine \(\mathbb F\)-line, or \(A\) is not contained
in an affine \(\mathbb F\)-line. Here \emph{an affine $\F$-line in $\F[x]$} is a set of the form
$a_0+\F h=\{a_0+ch: c \in \F\}$, where $a_0\in\F[x]$ and
$h\in\F[x]\setminus\{0\}$. Note that the case in which \(A\) is contained in an affine $\F$-line has been essentially handled in Section~\ref{subsec:CN}.

\subsection{The exceptional set \(A\cap s\F\)}
\begin{lem}\label{lem:sF-small}
Let \(s\in\F[x]\setminus\{0\}\), \(n=s^2\), and let \(A\subset\F[x]\) be a
Diophantine tuple with property \(D_k(n)\), where \(k\ge18\). If \(n\) is a
\(k\)-th power, assume additionally that \(A\not\subset s\F\). Then
\[
        |A\cap s\F|\le 2.
\]
\end{lem}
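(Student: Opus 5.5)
The plan is to write every element of $A\cap s\F$ as $cs$ with $c\in\F$, and to split according to whether $n=s^2$ is a $k$-th power. The key observation is that $\F$ is algebraically closed, so every nonzero constant is a $k$-th power. Hence for $u\in\F[x]$ and $\gamma\in\F^\times$, the product $\gamma u$ is a $k$-th power if and only if $u$ is.

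\textbf{Case 1: $n$ is not a $k$-th power.} Take two distinct elements $c_1s,c_2s\in A\cap s\F$. Then
\[
c_1s\cdot c_2s+n=s^2(c_1c_2+1)
\]
must be a $k$-th power. By the observation above, this is possible only if $c_1c_2+1=0$. Suppose $A\cap s\F$ contained three distinct elements $c_1s,c_2s,c_3s$. Then $c_1c_2=c_1c_3=c_2c_3=-1$. These relations force every $c_i\neq0$, and then $c_1c_2=c_1c_3$ gives $c_2=c_3$, a contradiction. So $|A\cap s\F|\le 2$ in this case.

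\textbf{Case 2: $n$ is a $k$-th power.} By hypothesis there is some $t\in A\setminus s\F$. Suppose, for contradiction, that $c_1s,c_2s,c_3s\in A\cap s\F$ are distinct, so the $c_i$ are distinct. For each $i$, put
\[
f_i=c_it+s,\qquad\text{so that}\qquad t\cdot c_is+n=s f_i .
\]
Each $sf_i$ is then a $k$-th power. Each $f_i$ is nonzero: otherwise $c_i\neq0$ and $t=-s/c_i\in s\F$.

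Since $t\notin s\F$ and $s\neq0$, the polynomials $t$ and $s$ are linearly independent over $\F$. It follows that the vectors $(c_i,1)$ are pairwise non-proportional, because the $c_i$ are distinct. Consequently the $f_i$ are pairwise non-proportional, and so are the $sf_i$. On the other hand, $f_1,f_2,f_3$ lie in the $2$-dimensional space $\F t+\F s$, so there is a nontrivial relation
\[
\lambda_1 f_1+\lambda_2 f_2+\lambda_3 f_3=0 .
\]
By pairwise independence of the $f_i$, all three coefficients $\lambda_i$ are nonzero. Multiply this relation by $s$ and write $sf_i=g_i^k$ with $g_i\neq0$. This gives
\[
\lambda_1g_1^k+\lambda_2g_2^k+\lambda_3g_3^k=0 ,
\]
and $g_1/g_j\notin\F$ for $j\ne1$, since $sf_1/sf_j\notin\F$. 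Corollary~\ref{cor:VW} with $m=3$ then yields $k<3$, contradicting $k\ge18$. Hence $|A\cap s\F|\le2$ in this case as well.

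The only delicate point is the bookkeeping that the $f_i$ are nonzero and pairwise non-proportional; everything else is the function-field Fermat bound supplied by Corollary~\ref{cor:VW}. This argument also covers the possibility $c_i=0$ (that is, $0\in A$): then $f_i=s$ and $sf_i=n$, which is a $k$-th power by assumption, so nothing changes.
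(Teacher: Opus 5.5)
Your proof is correct and takes the paper's route: the same case split, the same pairwise-product argument when $n$ is not a $k$-th power, and, when $n$ is a $k$-th power, a three-term relation among pairwise nonproportional $k$-th powers fed into Corollary~\ref{cor:VW} with $m=3$. The only difference is minor. The paper uses two nonzero elements of $A\cap s\F$, takes the third term to be $n=q^k$ (your case $c_3=0$), and checks nonproportionality with $h_2$ and with $q$ separately. You instead use a third element of $A\cap s\F$ and get nonproportionality in one step from the linear independence of $t$ and $s$, so your Case 2 never actually uses that $n$ is a $k$-th power.
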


\begin{proof}
Assume first that $n$ is not a $k$-th power. Suppose $|A\cap s\F|\ge 3$. If $0\in A\cap s\F$, then there is a nonzero $cs\in A\cap s\F$, and
\[
 0\cdot cs+n=n
\]
would be a $k$-th power, a contradiction. Hence there are three distinct nonzero constants $c_1,c_2,c_3\in \F^\times$ such that $c_1s,c_2s,c_3s\in A$. At least one of
\[
 c_1c_2+1,
 \qquad
 c_1c_3+1,
 \qquad
 c_2c_3+1
\]
is nonzero; otherwise $c_1c_2=c_1c_3=c_2c_3=-1$, forcing $c_2=c_3$, a contradiction. After relabeling, assume $c_1c_2+1\neq 0$. Then
\[
 (c_1s)(c_2s)+s^2=(c_1c_2+1)n
\]
is a $k$-th power. Since $\F$ is algebraically closed and $c_1c_2+1\neq 0$, this implies that $n$ itself is a $k$-th power, a contradiction.

Now assume that $n$ is a $k$-th power. Then there exists $q\in \F[x]\setminus \{0\}$ such that $n=s^2=q^k$. By hypothesis $A\not\subset s\F$, so choose $a\in A\setminus s\F$. If $|A\cap s\F|\ge 3$, choose two distinct nonzero constants $c_1,c_2\in \F^\times$ such that $c_1s,c_2s\in A$. Since $A$ has property $D_k(n)$, there exist $h_1,h_2\in \F[x]$ such that
\[
 ac_1s+s^2=h_1^k,
 \qquad
 ac_2s+s^2=h_2^k.
\]
The polynomials \(h_1,h_2\) are nonzero. Indeed, if \(h_i=0\) for some
\(i\in\{1,2\}\), then
\[
        ac_i s+s^2=0,
\]
so \(a=-s/c_i\in s\F\), contradicting the choice of \(a\). Observe that we have
\begin{equation}\label{eq:c1c2}
 c_2h_1^k-c_1h_2^k=(c_2-c_1)q^k.
\end{equation}
We claim that $h_1$ is not proportional to either $h_2$ or $q$. 

Suppose first that \(h_1=\lambda h_2\) for some \(\lambda\in \F^\times\). Then
\[
        ac_1s+s^2=h_1^k=\lambda^k h_2^k
        =\lambda^k(ac_2s+s^2),
\]
and hence
\[
        a(c_1-\lambda^k c_2)=(\lambda^k-1)s.
\]
Here \(\lambda^k\neq 1\), since otherwise \(h_1^k=h_2^k\), forcing \(c_1=c_2\). Therefore the
right-hand side is nonzero, and the displayed identity gives \(a\in s\F\), a contradiction.

Similarly, if $h_1=\lambda q$ for some $\lambda\in\F^\times$, then
\[
ac_1s+s^2=h_1^k=\lambda^kq^k=\lambda^ks^2,
\]
so $ac_1=(\lambda^k-1)s$, again contradicting $a\notin s\F$. Hence $h_1/q\notin\F^\times$.

By Corollary~\ref{cor:VW} applied to the three-term relation~\eqref{eq:c1c2}, we get $k<3$, contradicting the assumption $k\ge 18$. 
\end{proof}

\subsection{Selection lemma: $A$ is not contained in an affine \(\F\)-line}
In this subsection, we consider the case where $A$ is not contained in an affine \(\F\)-line. We apply a geometric argument.

\begin{lem}
\label{lem:non-affine-7}
Let \(s\in \F[x]\setminus\{0\}\), and let \(A\subset \F[x]\) be a Diophantine tuple with property
\(D_k(s^2)\), where \(k\ge 18\). Suppose that \(|A|\ge 7\) and that \(A\) is not contained in an
affine \(\F\)-line. Then there exist pairwise distinct \(a_1,a_2,b_1,b_2\in A\) such that
\[
        a_i b_j+s^2\neq 0\qquad (1\le i,j\le 2),
\]
and
\[
        \frac{(a_1b_1+s^2)(a_2b_2+s^2)}
             {(a_1b_2+s^2)(a_2b_1+s^2)}
        \notin \F^\times .
\]
\end{lem}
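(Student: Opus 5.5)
We argue by contradiction. Assume that for every choice of pairwise distinct $a_1,a_2,b_1,b_2\in A$ with all four shifted products $a_ib_j+s^2$ nonzero, the cross ratio
\[
\frac{(a_1b_1+s^2)(a_2b_2+s^2)}{(a_1b_2+s^2)(a_2b_1+s^2)}
\]
lies in $\F^\times$. The goal is to show that $A$ then lies in an affine $\F$-line. Put $n=s^2$. Recall the identity
\[
N-M=n(a_1-a_2)(b_1-b_2)\neq 0,
\]
which is the same computation as in Claim~\ref{lem:two-term-opposite-parity}. Zero products are controlled as in Lemma~\ref{lem:extension-no-C}: for fixed $a$ at most one $b$ satisfies $ab+n=0$. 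So, after discarding at most a bounded number of elements, we may always choose configurations with nonzero products. The assumption $|A|\ge 7$ leaves enough room for these deletions.

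\textbf{Step 1: reformulate the hypothesis for a fixed pair $a_1,a_2$.} Define $\varphi(b)=(a_1b+n)/(a_2b+n)\in\F(x)$. The cross ratio for $(a_1,a_2;b_1,b_2)$ is $\varphi(b_1)/\varphi(b_2)$. So the standing assumption says that $\varphi(b)$ is independent of $b$ up to scalars, as $b$ runs over the admissible set $B\subset A\setminus\{a_1,a_2\}$, which has at least $3$ elements. Fix $b_0\in B$ and write $\varphi_0=\varphi(b_0)$. For every $b\in B$ there is $\lambda_b\in\F^\times$ with $a_1b+n=\lambda_b\varphi_0(a_2b+n)$. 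Solving for $b$ gives
\[
b=\frac{n(\lambda_b\varphi_0-1)}{a_1-\lambda_b\varphi_0a_2}.
\]
Thus $B$ lies in the image of a single Möbius map $\lambda\mapsto b(\lambda)$ over $\F(x)$, with coefficients determined by $a_1,a_2,\varphi_0$.

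\textbf{Step 2: force the Möbius family to be affine.} Write $\varphi_0=u/w$ in lowest terms. Then
\[
b(\lambda)=\frac{n(\lambda u-w)}{a_1w-\lambda u a_2}.
\]
This must be a polynomial for at least three distinct values of $\lambda$. I expect that three polynomial values of a genuinely fractional Möbius map, combined with the linear relation $a_1b+n=\lambda\varphi_0(a_2b+n)$, force one of two outcomes. Either the denominator is independent of $\lambda$ up to scalars, i.e. $ua_2$ and $a_1w$ are proportional, in which case $b(\lambda)$ is affine in $\lambda$. Or the map degenerates. The tool here is a divisibility argument: if three distinct linear combinations $a_1w-\lambda ua_2$ each divide the corresponding $n(\lambda u-w)$, then comparing degrees and common factors, as in the proof of Lemma~\ref{lem:valuation-ratio}, gives the proportionality.

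\textbf{Step 3: combine different pairs.} Step 2 shows that $A\setminus\{a_1,a_2\}$ minus a bounded exceptional set lies in an affine line $L_{a_1,a_2}$. Now rerun the argument with $(a_1,a_2)$ replaced by another pair taken from that line, and also by a pair using elements off the line. Two affine lines sharing three points coincide. From this I get that all of $A$ lies on one affine $\F$-line, contradicting the hypothesis.

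The main obstacle is Step 2. One must rule out the possibility that the Möbius family produces several polynomial values without being affine. Making this rigorous requires a careful gcd and degree analysis of $a_1w-\lambda ua_2$ dividing $n(\lambda u-w)$. The most likely route is to use the symmetric relation coming from swapping the roles of the $a$'s and the $b$'s to gain a second constraint.
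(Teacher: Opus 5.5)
\textbf{There is a genuine gap: Steps 1--3 never use that the products $ab+s^2$ are $k$-th powers.} They use only that the cross ratios are constant and that the elements of $A$ are polynomials. Under those hypotheses alone the conclusion is false, so no divisibility or degree analysis of the M\"obius family in Step 2 can succeed.

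Here is a counterexample. Take distinct $c_1,\dots,c_7\in\F$ with $c_j^2\neq-1$ (any number of roots works), put $s=\prod_{j}(x-c_j)$, and set
\[
a_j=s\,\frac{c_jx+1}{c_j-x}=-c_js+(c_j^2+1)\frac{s}{c_j-x}\in\F[x].
\]
A direct computation gives
\[
a_ia_j+s^2=\frac{(c_ic_j+1)(x^2+1)\,s^2}{(c_i-x)(c_j-x)} .
\]
So the factors $(c-x)$ cancel in every cross ratio. Whenever the four shifted products are nonzero, the cross ratio for $(a_{p_1},a_{p_2};a_{q_1},a_{q_2})$ equals
\[
\frac{(c_{p_1}c_{q_1}+1)(c_{p_2}c_{q_2}+1)}{(c_{p_1}c_{q_2}+1)(c_{p_2}c_{q_1}+1)}\in\F^\times .
\]
On the other hand, $s$ and the polynomials $s/(c_j-x)$ are linearly independent over $\F$, so no three of the $a_j$ lie on an affine $\F$-line.

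For this set, your map $\lambda\mapsto b(\lambda)$ is nondegenerate (its determinant is $nuw(a_1-a_2)\neq0$). It is not affine, yet it takes polynomial values at many $\lambda$. So the conclusion of Step 2, that $A\setminus\{a_1,a_2\}$ minus a bounded set lies on a line, fails. Swapping the roles of the $a$'s and $b$'s gives nothing new, because the example is symmetric.

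\textbf{The missing idea is to feed the $D_k(s^2)$ hypothesis into the constant cross ratio.} Take distinct $r,t$, and distinct $w,z$ whose products with $r$ and $t$ are nonzero. The constant $C$ then satisfies $C\neq1$ and
\[
s^2(r-t)(w-z)=(C-1)(rz+s^2)(tw+s^2).
\]
The right side is a nonzero $k$-th power, so with $H=s^2(r-t)$ each $H(w-z)$ equals some $g^k$. Three such points $w_1,w_2,w_3$ give
\[
g_{12}^k-g_{13}^k+g_{23}^k=0.
\]
The Mason--Stothers-type Corollary~\ref{cor:VW} then forces $g_{12},g_{13},g_{23}$ to be proportional (already for $k\ge3$), hence $w_1,w_2,w_3$ are collinear. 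This is exactly how the paper shows that $G(r,t)$, the set of $w$ with $rw+s^2\neq0\neq tw+s^2$, lies on an affine line.

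The paper's gluing step, which replaces your Step 3, works as follows:
\begin{enumerate}
\item Choose $u,v$ with $uv+s^2=0$ if such a pair exists. Then $G(u,v)=A\setminus\{u,v\}$, and it lies on a line $L$.
\item Choose a second pair $c,d\in G(u,v)$ by the same rule. Then $G(c,d)$ contains $u$, $v$ and two points of $L$, so it lies on $L$, and hence all of $A$ does.
\end{enumerate}
Your Step 1 already contains the input for the displayed identity (compare $\varphi(b)$ with $\varphi(b')$). Step 2, however, has to be replaced by this Fermat-type argument.
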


\begin{proof}
Suppose, for contradiction, that no such four elements exist. For distinct \(r,t\in A\), put
\[
        G(r,t)=\{w\in A\setminus\{r,t\}: rw+s^2\neq 0,\; tw+s^2\neq 0\}.
\]
We shall use repeatedly the following fact: for each \(a\in A\), the equation
\(ab+s^2=0\) has at most one solution \(b\in A\setminus\{a\}\). Indeed, two such
solutions \(b,b'\) would give \(a(b-b')=0\), while the existence of a solution already
forces \(a\neq0\).

\begin{claim}\label{claim:affineline}
For every distinct \(r,t\in A\), the set \(G(r,t)\) is contained in an affine \(\F\)-line.    
\end{claim}

\begin{poc}

Since at most two elements of \(A\setminus\{r,t\}\) are excluded, we have
\[
        |G(r,t)|\ge |A|-4\ge 3.
\]
It is enough to show that any three elements of \(G(r,t)\) are collinear over \(\F\). 

Let
\(w,z\in G(r,t)\) be distinct. By our assumption to the contrary,
\[
        C=\frac{(rw+s^2)(tz+s^2)}{(rz+s^2)(tw+s^2)}\in \F^\times.
\]
Moreover \(C\neq 1\), because
\[
        (rw+s^2)(tz+s^2)-(rz+s^2)(tw+s^2)=s^2(r-t)(w-z)\neq 0.
\]
Thus, with \(H=s^2(r-t)\), we have
\[
        H(w-z)=(C-1)(rz+s^2)(tw+s^2).
\]
The right-hand side is a nonzero scalar multiple of a \(k\)-th power, and since \(\F\) is
algebraically closed, there is \(g\in \F[x]\setminus\{0\}\) such that
\[
        H(w-z)=g^k .
\]

Now take distinct \(w_1,w_2,w_3\in G(r,t)\). Then
\[
        H(w_1-w_2)=g_{12}^k,\qquad
        H(w_1-w_3)=g_{13}^k,\qquad
        H(w_2-w_3)=g_{23}^k,
\]
and hence
\[
        g_{12}^k-g_{13}^k+g_{23}^k=0.
\]
By Corollary~\ref{cor:VW}, the polynomials \(g_{12},g_{13},g_{23}\) must all be
proportional over \(\mathbb F^\times\); otherwise one of them would be nonproportional to the other two, giving \(k<3\). Therefore \(w_1-w_2,w_1-w_3,w_2-w_3\) are proportional over \(\F^\times\), so
\(w_1,w_2,w_3\) lie on an affine \(\F\)-line. 

Fix two distinct elements \(a,b\in G(r,t)\). By the above argument, every third element of \(G(r,t)\)
is collinear with \(a\) and \(b\). The whole set \(G(r,t)\) is contained in the affine
\(\F\)-line through \(a\) and \(b\).
\end{poc}
Choose distinct \(u,v\in A\) such that \(uv+s^2=0\), if such a pair exists; otherwise choose
arbitrary distinct \(u,v\in A\). Then
\[
        G(u,v)=A\setminus\{u,v\}.
\]
Indeed, this is immediate if no pair in \(A\) has product \(-s^2\); and if \(uv+s^2=0\), then the
uniqueness observation shows that no \(w\in A\setminus\{u,v\}\) is excluded from \(G(u,v)\).
Thus \(|G(u,v)|=|A|-2\ge 5\), and by Claim~\ref{claim:affineline}, \(G(u,v)\) is contained in an affine \(\F\)-line, say \(L\).

Choose distinct \(c,d\in G(u,v)\) such that \(cd+s^2=0\), if such a pair exists inside \(G(u,v)\);
otherwise choose arbitrary distinct \(c,d\in G(u,v)\). We claim that
\[
        \{u,v\}\cup\bigl(G(u,v)\setminus\{c,d\}\bigr)\subset G(c,d).
\]
Indeed, \(u,v\in G(c,d)\) because \(c,d\in G(u,v)\). If \(z\in G(u,v)\setminus\{c,d\}\), then
\(cz+s^2\) and \(dz+s^2\) are nonzero: this follows either from the choice \(cd+s^2=0\) and
uniqueness, or from the fact that no pair inside \(G(u,v)\) has product \(-s^2\). Hence
\(z\in G(c,d)\).

Since \(|G(u,v)\setminus\{c,d\}|\ge 3\), choose distinct
\(z_1,z_2\in G(u,v)\setminus\{c,d\}\). Then \(u,v,z_1,z_2\in G(c,d)\). By Claim~\ref{claim:affineline}, \(G(c,d)\)
is contained in an affine \(\F\)-line. This line contains \(z_1,z_2\in L\), so it is \(L\). Hence
\(u,v\in L\). Therefore
\[
        A=\{u,v\}\cup G(u,v)\subset L,
\]
contradicting the assumption that \(A\) is not contained in an affine \(\F\)-line.
\end{proof}

\subsection{Finishing the proof }
We conclude the paper with a proof of Theorem~\ref{thm:main} for square $n$.

\begin{thm}\label{thm2}
Let $s\in \F[x] \setminus \{0\}$ and $n=s^2$. Let $A\subset \F[x]$ be a Diophantine tuple with property $D_k(n)$. If $n$ is a $k$-th power in $\F[x]$, assume additionally that $A\not\subset s\F$. If $k\ge 18$, then
\[
 |A|\le 6.
\]
\end{thm}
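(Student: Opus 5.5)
Suppose, for contradiction, that $|A|\ge 7$. The plan is to produce pairwise distinct $a_1,a_2,b_1,b_2\in A$ with $a_ib_j+s^2\neq 0$ for $1\le i,j\le 2$ and with nonconstant cross ratio
\[
\frac{(a_1b_1+s^2)(a_2b_2+s^2)}{(a_1b_2+s^2)(a_2b_1+s^2)}\notin\F^\times .
\]
We then extend this configuration to six elements using Lemma~\ref{lem:extension-no-C}, which applies because $|A|\ge 7$. The extension gives $a_3,b_3\in A$ such that all six elements are pairwise distinct and all nine products $a_ib_j+s^2$ are nonzero. This contradicts Proposition~\ref{prop:18}, since $k\ge 18$.

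To find the four elements, we split according to whether $A$ lies in an affine $\F$-line.

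\textbf{Case 1: $A$ is not contained in an affine $\F$-line.} Lemma~\ref{lem:non-affine-7} gives the required $a_1,a_2,b_1,b_2$ directly.

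\textbf{Case 2: $A\subset a_0+\F h$ for some $a_0\in\F[x]$ and $h\neq 0$.} We want to apply Corollary~\ref{cor:affine}, which needs $A\not\subset s\F$ and $|A|\ge 7$.
\begin{enumerate}[(i)]
\item If $n$ is a $k$-th power, $A\not\subset s\F$ holds by hypothesis.
\item If $n$ is not a $k$-th power, Lemma~\ref{lem:sF-small} gives $|A\cap s\F|\le 2<7\le |A|$, so again $A\not\subset s\F$.
\end{enumerate}
In both subcases Corollary~\ref{cor:affine} supplies pairwise distinct $a_1,a_2,b_1,b_2$ with the nonvanishing and nonconstant cross-ratio properties.

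The genuinely hard steps are already isolated in Lemma~\ref{lem:non-affine-7} and Corollary~\ref{cor:affine}, so what remains is bookkeeping. The one point to check is that Corollary~\ref{cor:affine} needs only $A\not\subset s\F$, and does not need the stronger $|A\cap s\F|\le 2$. This holds, since its proof only requires that $p=a_0/s$ and $q=h/s$ are not both constant. Indeed, if both were constant, then every element $a_0+\lambda h$ of $A$ would lie in $s\F$.
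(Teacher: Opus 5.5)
Your proposal is correct and matches the paper's proof: you obtain the $2\times 2$ configuration from Lemma~\ref{lem:non-affine-7} or Corollary~\ref{cor:affine}, using Lemma~\ref{lem:sF-small} to secure $A\not\subset s\F$, then extend it with Lemma~\ref{lem:extension-no-C} and contradict Proposition~\ref{prop:18}. The only difference is cosmetic: you check $A\not\subset s\F$ only in the affine case, where it is actually needed, whereas the paper establishes it up front.
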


\begin{proof}
Suppose, for contradiction, that
\[
 |A|\geq 7.
\]
We first note that \(A\not\subset s\F\). Indeed, if \(n\) is a \(k\)-th power in \(\F[x]\), this is part of the hypothesis. If \(n\) is not a \(k\)-th power, then Lemma~\ref{lem:sF-small} gives \(|A\cap s\F|\le 2\), while \(|A|\ge 7\), and hence \(A\not\subset s\F\).

We now find pairwise distinct
\[
 a_1,a_2,b_1,b_2\in A
\]
such that
\[
 a_i b_j+s^2\neq 0\qquad (1\leq i,j\leq 2),
\]
and
\[
 \frac{(a_1b_1+s^2)(a_2b_2+s^2)}
        {(a_1b_2+s^2)(a_2b_1+s^2)}
 \notin \F^\times.
\]
If \(A\) is contained in an affine
\(\F\)-line, this follows from Corollary~\ref{cor:affine}; otherwise, this follows from
Lemma~\ref{lem:non-affine-7}. 

Now Lemma~\ref{lem:extension-no-C} gives
\(a_3,b_3\in A\) such that the resulting six elements are pairwise distinct
and all \(a_i b_j+n\) are nonzero for $1\leq i,j\leq 3$. This contradicts Proposition~\ref{prop:18}.
\end{proof}

\section*{Acknowledgments}
The authors thank Ernie Croot, Andrej Dujella, Seoyoung Kim, and Th\'{a}i Ho\`ang L\^{e} for helpful discussions. 

\bibliographystyle{abbrv}
\bibliography{references}

\end{document}